\newcommand{\folder}{\string~/Getextes/HeaderLiteratur/}
\newcommand{\bC}{{\mathbb C}}
\newcommand{\bN}{{\mathbb N}}
\newcommand{\bP}{{\mathbb P}}
\newcommand{\bQ}{{\mathbb Q}}
\newcommand{\bR}{{\mathbb R}}
\newcommand{\bZ}{{\mathbb Z}}
\newcommand{\cA}{{\mathscr A}}
\newcommand{\cB}{{\mathscr B}}
\newcommand{\cC}{{\mathscr C}}
\newcommand{\cH}{{\mathscr H}}
\newcommand{\cK}{{\mathscr K}}
\newcommand{\cL}{{\mathscr L}}
\newcommand{\cX}{{\mathscr X}}
\newcommand{\dR}{{\mathcal R}}
\newcommand{\dU}{{\mathcal U}}
\renewcommand{\phi}{\varphi}
\DeclareMathOperator{\CH}{CH}
\DeclareMathOperator{\DCH}{DCH}
\DeclareMathOperator{\iso}{\cong}
\DeclareMathOperator{\inj}{\hookrightarrow}
\DeclareMathOperator{\surj}{\twoheadrightarrow}
\DeclareMathOperator{\Sym}{Sym}
\DeclareMathOperator{\too}{\longrightarrow}
\DeclareMathOperator{\Pic}{Pic} 
\DeclareMathOperator{\Div}{Div} 
\DeclareMathOperator{\NS}{NS}
\DeclareMathOperator{\del}{\partial}
\DeclareMathOperator{\birKbar}{\overline{\cB\cK}}
\DeclareMathOperator{\Mon}{Mon}
\DeclareMathOperator{\MonHdg}{Mon^2_\text{Hdg}}
\DeclareMathOperator{\Hilb}{Hilb}
\DeclareMathOperator{\UR}{\dU}
\DeclareMathOperator{\Refl}{\dR_\dU}
\newcommand{\Xtild}{\widetilde{X}}
\newcommand{\Btild}{\widetilde{B}}
\newcommand{\Xhat}{\hat{X}}
\newcommand\restr[2]{{
  \left.\kern-\nulldelimiterspace 
  #1 
  \vphantom{\big|} 
  \right|_{#2} 
  }}
\newif\ifmyversion
\newcommand{\TODO}[1]{}
\newcommand{\Martin}[1]{}
\theoremstyle{plain}
\newtheorem{proposition}{Proposition}[section]
\newtheorem{lemma}[proposition]{Lemma}
\newtheorem{corollary}[proposition]{Corollary}
\newtheorem{theorem}[proposition]{Theorem}
\theoremstyle{definition}
\newtheorem{definition}[proposition]{Definition}
\newtheorem{remark}[proposition]{Remark}
\theoremstyle{remark}
\newtheoremstyle{name}
   {}{}{\itshape}{}{\bfseries }{}{ }{\thmname{#3}.}
\theoremstyle{name}
\newtheorem{name}{bla}
\numberwithin{equation}{section}					
\begin{document}
\title[On the Beauville conjecture]{On the Beauville conjecture}
\author[U. Rie\ss]{Ulrike Rie\ss}
\address{Mathematical Institute, Endenicher Allee 60, 53115 Bonn, Germany}
\email{uriess@math.uni-bonn.de}

\begin{abstract} 
  We investigate Beauville's conjecture on the Chow ring of irreducible symplectic varieties. For
  special irreducible symplectic varieties we
  relate it to a conjecture on the existence of rational Lagrangian fibrations, which proves Beauville's
  conjecture in many new cases. We further apply the 
  same techniques to reduce Beauville's conjecture to
  Picard rank two.
\end{abstract}

\maketitle
\let\thefootnote\relax\footnotetext{Funded by the SFB/TR 45 `Periods,
moduli spaces and arithmetic of algebraic varieties' of the DFG
(German Research Foundation)}

\section{Introduction}
The object of this article is to study the following conjecture, due to Beauville in \cite{Beauville},
which deals with the Chow ring of an {\it irreducible symplectic variety} $X$, i.e.\,of a simply connected smooth
projective variety $X$, such that $H^0(X,\Omega_X^2)$ is spanned by a nowhere degenerate two-form.

\begin{name}[Conjecture (WSP)]
  For an irreducible symplectic variety $X$, let $\DCH(X) \subseteq \CH_\bQ(X)$ be the subalgebra
  generated by divisor classes. Then the restriction of the cycle class map
  $\restr{c_X}{\DCH(X)}\colon\DCH(X)\inj H^*(X,\bQ)$ is
  injective.
\end{name}

Beauville first stated this conjecture, when he was investigating the conjectural
Bloch--Beilinson filtration (see e.g. \cite[Chapter 11.2]{VoisinII}) on the Chow ring of irreducible
symplectic varieties. Motivated by the 
results about the Chow rings of K$3$ surfaces (see \cite{Beauville-Voisin}) and abelian varieties (see
\cite{Beauville1986}), Beauville asked in \cite{Beauville} whether the
conjectural Bloch--Beilinson filtration would split for irreducible symplectic varieties. He then
observed that such a splitting 
would immediately imply the injectivity of $\restr{c_X}{\DCH(X)}\colon\DCH(X)\inj H^*(X,\bQ)$, which
 is frequently called {\it weak splitting property} (thus the notation WSP). 

Beauville showed in \cite{Beauville} that WSP holds for $\Hilb^n(S)$, for a K$3$ surface $S$,
when $n=2,3$. Voisin extended this to $n\leq 2 b_2(S)_{tr}+4$, where $b_2(S)_{tr}$ is the rank of the
transcendental lattice of $S$, and further proved a generalization
of WSP for Fano varieties of lines on cubic fourfolds (see \cite{Voisin-08}).
Later, Lie Fu proved Voisin's generalization of WSP for arbitrary generalized Kummer
varieties (see \cite{Fu13}).
In an earlier paper, the author proved invariance of WSP under birational transformations (see
\cite{Riess13}). 

In this article we establish a connection of WSP to a conjecture on the existence of rational
Lagrangian fibrations (the Hasset--Tschinkel--Huybrechts--Sawon conjecture):

For an irreducible symplectic variety $X$,  denote by $\birKbar_X \subseteq
 H^{1,1}(X,\bR)$ the closure of its birational Kähler cone. 
Further let $q$ be the Beauville--Bogomolov quadratic form on the second cohomology of $X$.

\begin{name}[Conjecture (RLF)]
  Let $X$ be an irreducible symplectic variety, and suppose  $0\neq L \in \Pic(X) \cap
  \birKbar\hspace{-0.4em}_X$ satisfies $q(L)=0$. Then $L$ induces a rational Lagrangian fibration.
\end{name}

Recall that $L$ is said to induce a rational Lagrangian fibration if there exists a birational map
$f\colon X \dashrightarrow X'$ to an irreducible symplectic variety $X'$, and
  a fibration $\phi\colon X'\surj B$, to an $n$-dimensional projective base $B$,
  such that $L$ corresponds to an ample line bundle on $B$. 

In this article we prove (see Theorem \ref{thm:ConjBtoConjA}):
\begin{name}[Theorem]
  If $X$ is an irreducible symplectic variety with a line bundle $0\neq L\in
  \Pic(X)$ satisfying $q(L)=0$, then RLF for $X$  implies WSP for $X$.
\end{name}

Recently, Matsushita studied the behaviour of rational Lagrangian fibrations under deformations (see
\cite{Matsushita13}). As a
corollary, he observed that RLF holds for $X$, if $X$ is either of K$3^{[n]}$-type or deformation
equivalent to a generalized Kummer variety (see \cite[Corollary 1.1]{Matsushita13}).  

Based on this result the above Theorem implies that WSP holds in many new cases
 (see Corollary \ref{cor:ConjAforSC}): 
\begin{name}[Corollary]
  Let $X$ be an irreducible symplectic variety with $0\neq L\in \Pic(X)$ such that $q(L)=0$. If
  furthermore $X$ is of $K3^{[n]}$-type or deformation equivalent to a
  generalized Kummer variety, then
  WSP (Beauville) is satisfied for $X$.
\end{name}

In particular, this provides examples of $\Hilb^n(S)$ for K$3$ surfaces $S$, with arbitrary $n$, for
which WSP holds.

The proof of the Theorem mainly consists of two steps. In the first part (see Section
\ref{sec:redtoLB}) we show that in the given situation
 it suffices to check that $L^{\dim(X)/2+1}=0\in \CH(X)$ for all isotropic line bundles $L$. This
relies on the description of the cohomology of irreducible symplectic varieties by Verbitsky and Bogomolov, and a few
explicit computations. 
The second part (see Section \ref{sec:redtobirKbar}) reduces to the case that the line bundles lie in
fact in $\birKbar_X$. It uses results of Huybrechts, and Markman's results on the decomposition of the positive cone,
together with the 
author's earlier work on Chow rings of birational irreducible symplectic varieties.
 For these special line bundles $L^{\dim(X)/2+1}=0\in \CH(X)$, follows immediately from
RLF.

As an additional application of the techniques of this article, we show that it is enough to prove WSP for all
irreducible symplectic varieties with Picard rank two. This is presented in Section \ref{sec:redtorho2}.

\bigskip
\noindent {\bf Acknowledgements.}
I thank my advisor Daniel Huybrechts for his support. I would also like to thank Christian Lehn for his
suggestions to the first version of this article. Last but not least, I am grateful to my husband, who
never stops believing in me.

\section{Reduction to a statement on line bundles} \label{sec:redtoLB}
Throughout this article the term variety denotes a separated integral scheme of finite type over $\bC$. For an
overview on irreducible symplectic varieties, we refer to \cite{Gross-Huybrechts-Joyce}.

For an irreducible symplectic variety $X$ of dimension $2n$, denote its Beauville--Bogomolov form
on $ H^2(X,\bQ)$ by $q$, and its associated bilinear form by $(\,,\,)_q$. Let $\NS(X)_\bQ\subset
H^{1,1}(X,\bQ)$ be the N\'eron--Severi group. Furthermore, let $\cC_X \subseteq H^{1,1}(X,\bR)$ be the positive
cone with respect to $q$ (i.e.\, the connected component of $\{\alpha \in H^{1,1}(X,\bR) \mid q(a)>0\}$
containing an ample class).

By $\CH(X)$, we denote the Chow ring with integral coefficients of $X$. For $K = \bQ, \bR, \bC$, we
let $\CH_K(X)\coloneqq \CH(X)\otimes K$.

\bigskip
In \cite{Beauville}, Beauville conjectured that for any irreducible symplectic variety $X$ of dimension
$2n$ the following holds:

\begin{name}[Conjecture (WSP$(X)$)]
  Let $\DCH(X) \subseteq \CH_\bQ(X)$ be the subalgebra
  generated by divisor classes. Then the restriction of the cycle class map
  \begin{equation*}
  \restr{c_X}{\DCH(X)}\colon\DCH(X)\inj H^*(X,\bQ)
\end{equation*}
is injective.
\end{name}

Consider the following alternative conjectures on $X$:

\begin{name}[Conjecture (B$_\bC(X)$)] 
  Every $\alpha\in \CH^1_\bC(X)$ with $q(\alpha)=0$ satisfies 
  \begin{equation*} 
     \alpha^{n+1}=0 \hspace{2 em} {\text in}\ \CH^{n+1}_\bC(X).
   \end{equation*}
 \end{name}

\begin{name}[Conjecture (B$_\bR(X)$)] 
  Every $\alpha\in \CH^1_\bR(X)$ with $q(\alpha)=0$ satisfies 
$
     \alpha^{n+1}=0 \in  \CH^{n+1}_\bR(X).
 $
 \end{name}

\begin{name}[Conjecture (B$_\bQ(X)$)] 
  Every $\alpha\in \CH^1_\bQ(X) = \NS(X)_\bQ$ with $q(\alpha)=0$ satisfies 
$
     \alpha^{n+1}=0 \in  \CH^{n+1}_\bQ(X).
 $
 \end{name}

The aim of this section is to prove the following proposition:
\begin{proposition}\label{prop:BeauvilletoLB}
  Let $X$ be an irreducible symplectic variety of dimension $2n$. Then WSP$(X)$,
  B$_\bC(X)$, and B$_\bR(X)$ are equivalent. If $X$ satisfies $\del\cC_X
\cap \NS(X)_\bQ\neq 0$, then the
  above also are equivalent to B$_\bQ(X)$.
\end{proposition}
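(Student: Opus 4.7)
I would establish the equivalences by a short cycle of implications. The easy ones are B$_\bC\Rightarrow$B$_\bR\Rightarrow$B$_\bQ$ (scalar restriction along $\bQ\subset\bR\subset\bC$) and WSP$\Rightarrow$B$_\bC$: the Verbitsky--Bogomolov theorem identifies the subalgebra of $H^*(X,\bC)$ generated by $H^2$ with $\Sym^*H^2(X,\bC)/I_\bC$, where $I_\bC=\langle\alpha^{n+1}:\alpha\in H^2(X,\bC),\,q(\alpha)=0\rangle$, so every $\alpha\in\NS(X)_\bC$ with $q(\alpha)=0$ satisfies $\alpha^{n+1}=0$ in cohomology, and the $\bC$-linear extension of WSP pulls this vanishing back to $\CH^{n+1}_\bC(X)$. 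The work is to close the circle.

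For B$_\bR\Rightarrow$B$_\bC$ and, under the hypothesis, B$_\bQ\Rightarrow$B$_\bR$, I would use parallel Zariski-density arguments. Fixing a $\bQ$-basis $(e_1,\ldots,e_\rho)$ of $\NS(X)_\bQ$ and writing $\alpha=\sum x_ie_i$, the assignment $x\mapsto\alpha^{n+1}$ becomes a polynomial map $P\colon\bA^\rho\to M$, where $M:=\operatorname{Im}\bigl(\Sym^{n+1}\NS(X)_\bQ\to\CH^{n+1}_\bQ(X)\bigr)$ is finite-dimensional; choosing a $\bQ$-basis of $M$ decomposes $P=\sum_jP_j\,f_j$ with $P_j\in\bQ[x_1,\ldots,x_\rho]$. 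For B$_\bR\Rightarrow$B$_\bC$ in the non-trivial range $\rho\geq 2$, the Hodge index theorem forces $q|_{\NS(X)_\bR}$ to have indefinite signature $(1,\rho-1)$, so the real quadric $\{q=0\}\cap\bR^\rho$ is a smooth hypersurface of maximal real dimension and is Zariski dense in the complex quadric; vanishing of each $P_j$ on the real cone (granted by B$_\bR$) therefore upgrades to vanishing on the complex cone, giving B$_\bC$. The cases $\rho\leq 1$ are either vacuous or reduce to the single identity $D^{n+1}=0$ on a generator. For B$_\bQ\Rightarrow$B$_\bR$, the hypothesis furnishes a rational isotropic $\beta_0$, and stereographic projection from $\beta_0$ gives a $\bQ$-birational equivalence $\{q=0\}\dashrightarrow\bP^{\rho-2}_\bQ$; hence rational isotropic classes are Zariski dense in the real isotropic cone, and the same polynomial-density argument upgrades B$_\bQ$ to B$_\bR$.

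The substance lies in B$_\bR\Rightarrow$WSP. Verbitsky's theorem over $\bR$ identifies the subalgebra of $H^*(X,\bR)$ generated by $H^2(X,\bR)$ with $\Sym^*H^2(X,\bR)/I_\bR$, and correspondingly the subalgebra generated by $\NS(X)_\bR$ with $\Sym^*\NS(X)_\bR/(I_\bR\cap\Sym^*\NS(X)_\bR)$. The crux is the algebraic identification that in degrees $\leq 2n$,
\[
  I_\bR\cap\Sym^*\NS(X)_\bR \;=\; J,\qquad J:=\langle\alpha^{n+1}:\alpha\in\NS(X)_\bR,\,q(\alpha)=0\rangle.
\]
One inclusion is formal; the converse uses the $q$-orthogonal decomposition $H^2(X,\bR)=\NS(X)_\bR\oplus\NS(X)_\bR^\perp$ (with $q$ non-degenerate on each factor) and explicit polarization of the generators $\alpha^{n+1}$ of $I_\bR$ to express their $\NS$-projections as products of isotropic $(n+1)$-powers from $\NS(X)_\bR$, modulo contributions of degree $>2n$ which vanish in Chow for dimensional reasons. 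Granting this, B$_\bR$ yields $J\subseteq\ker(\Sym^*\NS(X)_\bR\twoheadrightarrow\DCH(X)_\bR)$, so $\DCH(X)_\bR$ is a quotient of $\Sym^*\NS(X)_\bR/J$; the composition $\Sym^*\NS(X)_\bR/J\to H^*(X,\bR)$ is injective in degrees $\leq 2n$ by the identification and Verbitsky, which forces $\DCH(X)_\bR\hookrightarrow H^*(X,\bR)$, i.e.\ WSP over $\bR$ and hence over $\bQ$ by flatness. I expect the algebraic identification—specifically the inclusion $I_\bR\cap\Sym^{\leq 2n}\NS(X)_\bR\subseteq J$—to be the main technical obstacle: though plausible from the indefinite non-degeneracy of $q|_{\NS(X)_\bR}$, its verification is the ``few explicit computations'' anticipated in the introduction.
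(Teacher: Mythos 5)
Your outer reductions are fine and essentially parallel the paper's: B$_\bR\Rightarrow$B$_\bC$ and B$_\bQ\Rightarrow$B$_\bR$ are handled in the paper by an explicit even/odd polynomial trick with an auxiliary class $\chi$ and by Euclidean density of rational boundary points plus a limit argument (Lemmas \ref{lem:NSdense}, \ref{lem:densesubset}); your Zariski-density version (polynomials $P_j$ with values in the finite-dimensional image of $\Sym^{n+1}\NS(X)_\bQ$ in $\CH^{n+1}_\bQ(X)$, density of real resp. rational points of the isotropic quadric) is a legitimate, arguably cleaner, substitute, modulo small case distinctions at $\rho\le 2$ (for $\rho=1$ the statements are vacuous since $q$ is positive on $\NS$, not ``$D^{n+1}=0$ on a generator'').

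The genuine gap is the step you yourself flag as the ``main technical obstacle'': the identification, in degrees $\le 2n$, of $\ker\bigl(\Sym^*\NS(X)\to H^*(X)\bigr)$ with the ideal generated by $\alpha^{n+1}$, $\alpha\in\NS(X)$ isotropic. Without it you have neither B$\Rightarrow$WSP nor, strictly speaking, a complete WSP$\Rightarrow$B (your easy direction only needs the cohomological vanishing $\alpha^{n+1}=0$, which is fine, but the equivalence as a whole hinges on the kernel statement). The paper does not reprove this: it is exactly Beauville's Corollary 2.3 (proved with Bogomolov's techniques), quoted as the statement that the kernel of $\Sym^*(\CH^1_\bC(X))\to H^*(X,\bC)$ is generated by $\{\alpha^{n+1}\mid q(\alpha)=0\}$, and the equivalence WSP$(X)\Leftrightarrow$B$_\bC(X)$ then follows by factoring through $\Sym^*(\CH^1_\bC(X))\twoheadrightarrow\DCH_\bC(X)\to H^*(X,\bC)$. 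Moreover, your proposed route to this identification does not work as described: the $q$-orthogonal projection $H^2(X,\bR)\to\NS(X)_\bR$ sends an isotropic generator $\alpha^{n+1}$ to $\nu^{n+1}$ with $\nu$ the $\NS$-component of $\alpha$, and $q(\nu)=-q(\tau)$ is in general nonzero (the orthogonal complement contains the transcendental part, on which $q$ is indefinite), so the projected generators are not isotropic powers and need not lie in $J$ individually; the cancellations among such non-isotropic powers inside $I\cap\Sym^*\NS(X)_\bR$ are precisely the content of Beauville's result, so the polarization sketch restates the difficulty rather than resolving it. The fix is either to cite \cite[Corollary 2.3]{Beauville} (as the paper does) or to actually reprove his Proposition 2.2-type statement for the nondegenerate subspace $\NS(X)\subset H^2(X)$; as written, the proposal does not establish the proposition.
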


\begin{proof}[Proof of Proposition \ref{prop:BeauvilletoLB}]
  We proceed by successively proving the equivalences:

\bigskip
 \noindent
{\it WSP$(X)$ $\Leftrightarrow$ B$_\bC(X)$: }
The equivalence of WSP$(X)$ and B$_\bC(X)$ was already observed by Beauville
(see \cite[Corollary 2.3]{Beauville}). In fact, using the same techniques as in
\cite{Bogomolov96}, Beauville showed that the kernel of the map $\Sym^*(\CH^1_\bC(X))\to H^*(X,\bC)$ is
generated by $\{\alpha^{n+1} \mid q(\alpha)=0\}\subseteq \Sym^{n+1}(\CH^1_\bC(X))$. Then the equivalence
follows immediately from considering $\Sym^*(\CH^1_\bC(X))\surj \DCH_\bC(X)\to H^*(X,\bC)$.

\bigskip
 \noindent
{\it   B$_\bC(X)$ $\Leftrightarrow$ B$_\bR(X)$: }
Clearly, B$_\bC(X)$ implies  B$_\bR(X)$.

For the other implication suppose that B$_\bR(X)$ holds.
  Let $\gamma \in \CH^1_\bC(X)$ satisfy $q(\gamma)=0$. Then there exist $\alpha,\beta\in \CH^1_\bR(X)$ with $\gamma
  = \alpha + i \beta$. We need to show that $\gamma^{n+1}=0\in \CH^n_\bC(X)$. The condition $q(\alpha+i\beta)=q(\gamma)=0$ implies that $q(\alpha)=q(\beta)$ and
  $(\alpha,\beta)_q=0$. 
  Since the signature of $q$ restricted to $\NS(X)_\bQ$ is $(1,\rho(X)-1)$, one concludes that
  $q(\alpha)=q(\beta)\leq 0$. Moreover, there
  exists an element $\chi \in  \CH^1_\bR(X)$ which satisfies $-q(\chi)=q(\alpha)=q(\beta)$ and
  $(\chi,\alpha)_q=0=(\chi,\beta)_q$ (in the case $q(\alpha)=q(\beta)=0$ just choose $\chi\coloneqq \alpha$). 

  Consider the polynomial expression $P(R,S,T)\coloneqq (R\chi+S\alpha+T\beta)^{n+1} \in \linebreak \CH^{n+1}_\bR(X)[R,S,T]$
  in three independent variables $R,S,T $. Let $P'$ be the sum of all terms including an odd power of $R$
  and let $P''$ be 
  the part of even powers of $R$. Then $P=P'+P''$. Define the polynomial expression 
  $p''(S,T)\coloneqq P''(\sqrt{S^2+T^2},S,T)\in \CH^{n+1}_\bR(X)[S,T]$.

  Fix $s,t \in \bR$ and set $r\coloneqq \sqrt{s^2+t^2}$. Observe that with this choice $q(r\chi+s\alpha+t\beta)=0$.
  By assumption $0=(r\chi+s\alpha+t\beta)^{n+1}=P(r,s,t) \in \CH^{n+1}_\bR(X)$. 
  The same argument yields
  $0=(-r\chi+s\alpha+t\beta)^{n+1}=P(-r,s,t) \in \CH^{n+1}_\bR(X)$.
  This implies:
  \begin{align*}
    0&=P(-r,s,t)=P'(-r,s,t)+P''(-r,s,t)=-P'(r,s,t)+P''(r,s,t) \\
    &=-P(r,s,t)+2P''(r,s,t)
    =2\;P''(r,s,t)=2\;p''(s,t).
  \end{align*}
  Since $p''$ is a polynomial expression that vanishes for all real values, it is the zero-polynomial.
  
  Finally, consider the complexified polynomials $P_\bC,P'_\bC,P''_\bC\in \CH^{n+1}_\bC(X)[R,S,T]$, and
  $p''_\bC \in \CH^{n+1}_\bC(X)[S,T]$, and observe that:
  \begin{align*}
    \gamma^{n+1}=(\alpha + i \beta)^{n+1}= (0\chi + 1 \alpha + i \beta)^{n+1}=P_\bC(0,1,i)
    =P''_\bC(0,1,i)=p''(1,i)=0.
  \end{align*}
  Therefore, B$_\bC(X)$ follows from  B$_\bR(X)$.

\bigskip
 \noindent
{\it   B$_\bR(X)$ $\Leftrightarrow$ B$_\bQ(X)$ if $\del\cC_X
\cap \NS(X)_\bQ\neq 0$: } Once again, B$_\bQ(X)$ is a special case of  B$_\bR(X)$, so
we only need to show the other implication. This is an immediate consequence of the following two lemmas:

\begin{lemma} \label{lem:NSdense}
  If $\del\cC_X \cap \NS(X)_\bQ\neq 0$, then $\del\cC_X \cap \NS(X)_\bQ$ is dense in $\del\cC\cap\NS(X)_\bR$.
\end{lemma}
\begin{proof}
  This is known for K$3$ surfaces (cf.\,\cite{HuybrechtsK3}) and a similar argument works here.

  Fix $\alpha \in \del\cC_X \cap \NS(X)_\bQ$.  For any $\beta'\in \cC_X\cap \NS(X)_\bQ$ the element $\gamma \coloneqq  2(\alpha,
  \beta')_q\beta' - q(\beta')\alpha$ is again an element of $\del\cC_X\cap \NS(X)_\bQ$. Since the signature of
  $(\,,\,)_q$ restricted to $\NS(X)_\bQ$ is $(1,\rho(X)-1)$, where $\rho(X)$ is the rank of $\NS(X)_\bQ$, one knows that
  $(\alpha,\beta')_q\neq 0$, and thus $\alpha$ and $\gamma$ are not collinear.

  Fix a metric on the finite dimensional $\bR$-vector space $\NS(X)_\bR$. For arbitrary $\beta \in
  \del\cC_X\cap \NS(X)_\bR$ and a given $\epsilon > 0$, consider the $\epsilon$-ball
  $B_\epsilon(\beta)\subseteq \NS(X)_\bR$ around $\beta$. The cone $\{(1-r)\alpha + rb \mid r\in
  \bR_{>0}, b\in B_\epsilon (\beta)\cap 
  \del\cC_X\}$ is an open cone, and thus contains a $\beta'\in \NS(X)_\bQ$. The associated $\gamma$ lies in
  $B_\epsilon(\beta)\cap \del\cC_X \cap \NS(X)_\bQ$. This proves the lemma. 
\end{proof}

  With this lemma, the desired implication follows from taking limits:

  \begin{lemma}\label{lem:densesubset}
    Let $M\subseteq \del\cC_X\cap \NS(X)_\bR$ be a dense subset. If for every $\alpha\in M$  the
    identity $\alpha^{n+1}=0 \in \CH^{n+1}_\bR (X)$ holds, then B$_\bR(X)$ is true.
  \end{lemma}

  \begin{proof}
  Fix $\alpha \in \CH^1_\bR(X)\cap\del\cC_X$ and choose a basis $L_1,\dots, L_\rho$ of (the $\bR$-vector
  space) $\CH^1_\bR(X) =
  \NS(X)_\bR$. Since $M \subseteq \del\cC_X\cap \NS(X)_\bR$ is dense, $\alpha$ is the 
  limit of $\alpha_i \in M$. The $\alpha_i$ can be written as $\alpha_i=\sum_{j=1}^\rho {s_{i,j} L_j}$, with
  $s_{i,j}\in \bR$ converging to the coefficients of $\alpha$ for $i\to \infty$.
  In particular for a polynomial $p \in \bQ[X_1,\dots,X_\rho]$ the expression $p(s_{i,1},\dots, s_{i,\rho})$
  converges in $\bR$ for $i\to \infty$.
  Therefore, $\alpha_i^{n+1} =(\sum_{j=1}^\rho {s_{i,j} L_j})^{n+1}$ converges to $\alpha^{n+1}$ (in the
  finite-dimensional sub vector space of $\CH_\bR^{n+1}(X)$, generated by degree $n+1$ monomials in the $L_j$).
  Since by assumption $\alpha_i^{n+1}=0$, one concludes that
  $
  \alpha^{n+1}=\lim_{i\to \infty} \alpha_i^{n+1}=0 \in \CH_\bR^{n+1}(X).
  $
  \end{proof}

Together, this concludes the proof of Proposition \ref{prop:BeauvilletoLB}.
\end{proof}

\section{Reduction to elements in $\birKbar_X$}\label{sec:redtobirKbar}

 Let $X$ be an irreducible symplectic variety of dimension $2n$ as before.
Denote its Kähler cone by $\cK_X\subseteq
H^{1,1}(X,\bR)$. Define its {\it birational Kähler cone} as
  \begin{equation*}
     \cB\cK_X\coloneqq  \bigcup_f f^* (\cK_{X'})\subseteq H^{1,1}(X,\bR),
  \end{equation*}
 where the union is taken over all birational maps $f\colon X\dashrightarrow X'$ from $X$ to another irreducible
 symplectic variety $X'$. Denote its closure by $\birKbar\hspace{-0.4em}_X\subseteq H^{1,1}(X,\bR)$. Note
 that the pullback along $f\colon X\dashrightarrow X'$ is well-defined, since the indeterminacy locus is of codimension
 at least two (see e.g. \cite[Lemma 2.6]{Huybrechts:HK:basic-results}).

Consider the following conjecture on $X$:
\begin{name}[Conjecture (B$_{\cB\cK}(X)$)] 
  Every $\alpha\in \NS(X)_\bQ\cap \del\cC_X \cap \birKbar_X$  satisfies 
$
     \alpha^{n+1}=0 \in \  \CH^{n+1}_\bQ(X).
 $
 \end{name}

The aim of this section is to prove the following proposition:
\begin{proposition}\label{prop:ConjQeqConjBK}
  For an irreducible symplectic variety $X$ of dimension $2n$, B$_\bQ(X)$ and
  B$_{\cB\cK}(X)$ are equivalent.
\end{proposition}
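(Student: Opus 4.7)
The implication B$_\bQ(X) \Rightarrow$ B$_{\cB\cK}(X)$ is immediate, since the hypothesis of B$_{\cB\cK}(X)$ is a specialization of that of B$_\bQ(X)$: any $\alpha \in \NS(X)_\bQ \cap \del\cC_X \cap \birKbar_X$ automatically satisfies $q(\alpha)=0$.

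For the converse, let $0 \neq \alpha \in \NS(X)_\bQ$ satisfy $q(\alpha)=0$. The identity $(-\alpha)^{n+1}=(-1)^{n+1}\alpha^{n+1}$, together with the fact that any nonzero isotropic class lies in $\overline{\cC_X} \cup (-\overline{\cC_X})$, lets me assume $\alpha \in \overline{\cC_X}$. The plan is to replace $\alpha$ by a representative in $\birKbar_X$ while preserving the relevant Chow-theoretic data. By Markman's decomposition of the positive cone, refining earlier results of Huybrechts, $\overline{\cC_X}$ is tiled by translates of $\overline{\cB\cK_X}$ under a Weyl group $W$ generated by reflections in classes of prime exceptional divisors, with $\overline{\cB\cK_X}$ serving as a fundamental domain. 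Consequently there exists $w \in W$ with $\alpha':= w(\alpha) \in \birKbar_X \cap \NS(X)_\bQ$. Since $W$ acts by $q$-isometries, $\alpha'$ is still isotropic, and B$_{\cB\cK}(X)$ directly yields $(\alpha')^{n+1}=0 \in \CH^{n+1}_\bQ(X)$.

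The crucial remaining step is to transfer this vanishing back to $\alpha^{n+1}$. A reflection $R_E$ in a prime exceptional divisor $E$ is only a cohomological isometry of $H^2(X,\bQ)$ and is typically not induced by a birational self-map of $X$, so the birational invariance of the Chow-theoretic data proved in the author's earlier paper cannot be invoked directly against $R_E$. The plan is to induct on the word length of $w$ in these reflections, reducing to a single $R_E$, and then to exploit Markman's precise description of $E$: it arises either from a divisorial contraction of a birational model of $X$, or from a wall of the movable cone bordering a Mukai-type flop. In either case, Huybrechts's and Markman's structure theorems produce a birational correspondence between $X$ and an irreducible symplectic variety $X^{+}$ along which $R_E$ is realized, so that Riess's earlier birational invariance of the Chow ring structure can be applied to pull the vanishing on $X^{+}$ back to $\alpha^{n+1}=0 \in \CH^{n+1}_\bQ(X)$.

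The main obstacle is to make this last realization uniform and precise: for a general prime exceptional divisor $E$, one needs to exhibit the birational correspondence concretely enough that the specific divisor class $\alpha$ on $X$ corresponds to $\alpha'=R_E(\alpha)$ on $X^+$ in a way compatible with the Chow-theoretic statement of Riess's earlier result. This is where I expect the case analysis (divisorial contraction versus flop wall, and the various types of prime exceptional divisors classified by Markman) to carry most of the technical weight.
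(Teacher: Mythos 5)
Your reduction of the problem — moving $\alpha$ into $\birKbar_X$ by a product of reflections in prime exceptional divisors, applying B$_{\cB\cK}(X)$ there, and then transferring the vanishing back — follows the same skeleton as the paper (your appeal to Markman's chamber decomposition plays the role of the paper's Proposition \ref{prop:RefltoBK}, which is proved there by a direct descent on $(\alpha_i,h)_q$). The genuine gap is in the transfer step, exactly the one you flag as carrying the technical weight. Your proposed mechanism — that Markman's structure theory of prime exceptional divisors yields a birational correspondence $X\dashrightarrow X^{+}$ ``along which $R_E$ is realized'', so that the birational invariance of Chow rings from the author's earlier work applies — cannot work as stated. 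By Proposition \ref{prop:birKbarFeC}, $\birKbar_X$ is precisely the part of $\overline{\cC_X}$ pairing nonnegatively with all prime exceptional classes, and $R_E$ reverses the sign of the pairing with $E$, so $R_E$ moves $\birKbar_X$ off itself; by contrast, the pullback along any birational map between irreducible symplectic varieties maps the closed birational Kähler cone of one onto that of the other. Hence no birational correspondence (in any of your envisaged cases, divisorial contraction or flop wall) can induce $R_E$ on $H^2$, and the induction on word length has nothing to reduce to.

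The paper closes this gap with a genuinely stronger ingredient, Proposition \ref{prop:MonOKonChow}: every $\mu\in\MonHdg(X)$ — in particular every $R_E$, which lies in $\MonHdg(X)$ by Markman's Proposition 6.2 — is induced by a cycle $Z_\mu\in\CH^{2n}(X\times X)$ acting on $\CH(X)$ as a graded ring isomorphism. This is not obtained from a birational map but from three inputs: Theorem \ref{thm:cycletodegofiso}, i.e.\ the author's earlier result that a degeneration of isomorphisms along algebraic spaces produces a cycle acting as a Chow-ring isomorphism; Proposition \ref{prop:alphaintoK}, an algebraic-space adaptation of Huybrechts' result making $\mu^{-1}$ of a general Kähler class Kähler on a birational model after such a degeneration; and Markman's Torelli-type Theorem \ref{thm:alreadyiso}, which upgrades the composite parallel transport operator (it sends a Kähler class to a Kähler class) to an honest isomorphism $f$, so that $Z_\mu=\Gamma_f\circ Z$ realizes $\mu$. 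With that cycle the computation $\alpha^{n+1}=Z_*(\beta^{n+1})=0$ goes through as you intend; without it, your argument does not transfer the vanishing across the reflections.
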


For the proof we use earlier results of Huybrechts, Markman, and the author. 

\begin{definition}
  \begin{enumerate}
  \item For two irreducible symplectic varieties $X$ and $X'$, an isomorphism 
$\mu \colon H^2(X,\bZ)\to H^2(X',\bZ)$ is
    called {\it parallel transport operator}, if it is induced by parallel transport with respect to a
    smooth family of irreducible symplectic varieties (over a possibly singular base).

  \item In the special case $X=X'$ the subgroup of $O(H^2(X,\bZ))$, whose elements are parallel transport
    operators is called {\it monodromy group} $\Mon^2(X)$.

  \item  Let $\MonHdg(X)$ be the subgroup preserving the Hodge structure on $H^2(X,\bZ)$ (i.e.\,the subgroup
    corresponding to ``loops'' in the period domain).
  \item Particularly interesting examples of parallel transport operators are provided by parallel
    transport along a {\it degeneration of isomorphisms} $(X,X',\cX, \cX',T)$,
  i.e. along families $\cX$ and $\cX'$ over a one-dimensional smooth base $T$, with $0\in T$
  satisfying  $X\iso \cX_0$ and $X'\iso\cX'_0$, and such that $\restr{\cX}{T\setminus\{0\}} \iso
  \restr{\cX'}{T\setminus\{0\}}$ are isomorphic over $T$. 

\item  We call $(X,X',\cX, \cX',T)$ a {\it degeneration of isomorphisms  along algebraic spaces}, if $\cX$ and $\cX'$
  are the realizations as manifolds of algebraic spaces.
  \end{enumerate}
\end{definition}

  For our purpose degenerations of isomorphisms along algebraic spaces are of special interest because of the
  following theorem:
\begin{theorem}\label{thm:cycletodegofiso}
  Consider a degeneration of isomorphisms along algebraic spaces $(X,X',\linebreak[0] \cX, \cX',T)$, and let $\mu$
  be the associated parallel transport operator.
  Then there exists a cycle $Z \in
  \CH^{2n}(X\times X')$, which induces $[Z]_*=\mu\colon H^2(X,\bZ)\to H^2(X',\bZ)$, and such that  the
  correspondence $Z_*\colon\CH(X)\to  \CH(X')$ is an isomorphism of graded rings.
\end{theorem}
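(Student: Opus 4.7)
The plan is to construct $Z$ as the specialisation to the central fibre of the closure of the graph of the generic isomorphism, and then to verify separately that $[Z]_\ast = \mu$ on $H^2$ and that $Z_\ast$ is an isomorphism of graded Chow rings. Concretely, inside the fibre product $\cX \times_T \cX'$ (of relative dimension $4n$ over the one-dimensional base $T$), I would take $\Gamma \subset (\cX \times_T \cX')|_{T\setminus\{0\}}$ to be the graph of the given isomorphism over the punctured base and let $\bar\Gamma$ denote its closure, which has pure dimension $2n+1$. The intersection $\bar\Gamma \cap (X \times X')$ is then a cycle of dimension $2n$ and defines $Z \coloneqq [\bar\Gamma \cap (X \times X')] \in \CH^{2n}(X \times X')$ of the correct codimension.

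For the identity $[Z]_\ast = \mu$ on $H^2$ I would argue via specialisation. Since $\pi\colon \cX \to T$ and $\pi'\colon \cX' \to T$ are smooth and proper, the sheaves $R^k\pi_\ast \bQ$ and $R^k\pi'_\ast \bQ$ are local systems, and the class of $\bar\Gamma$ induces, for each $t \in T$, a map $H^k(\cX_t,\bQ)\to H^k(\cX'_t,\bQ)$ varying continuously in $t$. For $t\neq 0$ this map is exactly the isomorphism induced by the family-level isomorphism, while by definition $\mu$ is the parallel transport of the same identification to $t=0$; continuity then forces $[Z]_\ast = \mu$ on $H^2$ (and in fact on all of $H^\ast$).

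The hard part is showing that $Z_\ast\colon \CH(X) \to \CH(X')$ is an isomorphism of \emph{graded rings}. The natural strategy is to form the opposite correspondence $Z' \in \CH^{2n}(X' \times X)$ as the specialisation of the inverse graph, and to note that over $T\setminus\{0\}$ the composition $\bar\Gamma^{-1} \circ \bar\Gamma$ equals the relative diagonal $\Delta_{\cX/T}$. Taking closures in $\cX\times_T\cX'\times_T\cX$, applying the usual intersection-theoretic projection, and restricting to the central fibre yields an identity of the form $Z'\circ Z = \Delta_X + E$ in $\CH(X\times X)$, with an excess cycle $E$ supported over the indeterminacy locus. The algebraic-space hypothesis is essential here, as it is what allows this intersection theory to be carried out algebraically on the total spaces rather than merely analytically.

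To dispose of $E$ and obtain a \emph{ring} isomorphism, I would identify $Z$ up to components acting trivially on $\CH$ with the closure of the graph of the birational map $X \dashrightarrow X'$ induced by $\bar\Gamma$ between the central fibres, and then invoke the author's earlier result in \cite{Riess13}, which states that for birational irreducible symplectic varieties the closure of the graph induces an isomorphism of graded Chow rings. The main obstacle is thus twofold: controlling the excess cycle $E$, and verifying compatibility with the multiplicative structure; the former is handled by the algebraic-space assumption and the standard refined intersection formalism, while the latter reduces, after the identification above, to the ring-level statement in \cite{Riess13}.
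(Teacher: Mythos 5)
Your construction of $Z$ as the central fibre of the closure $\overline{\Gamma}\subseteq\cX\times_T\cX'$ of the graph over $T\setminus\{0\}$, and your specialisation argument identifying $[Z]_*$ with $\mu$ on $H^2$, coincide with the paper's proof (which phrases the cohomological step simply as flatness of $\overline{\Gamma}$ over $T$). The gap lies entirely in your treatment of the statement that $Z_*\colon\CH(X)\to\CH(X')$ is an isomorphism of \emph{graded rings}, which is the actual content of the theorem. The proposed identity $Z'\circ Z=\Delta_X+E$ would at best address bijectivity, and only once the excess term $E$ has genuinely been shown to act as zero on $\CH(X)$ -- which you do not do and which is not automatic; in any case it says nothing about compatibility with products.

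More seriously, your final reduction -- ``identify $Z$, up to components acting trivially on $\CH$, with the closure of the graph of the induced birational map $X\dashrightarrow X'$, then invoke \cite{Riess13}'' -- rests on a false intermediate claim and a misattribution. The specialised cycle decomposes as $Z=\overline{\Gamma}_f+\sum Y_i$ with extra components supported over the exceptional loci, and these components do \emph{not} act trivially: already for a Mukai flop the correspondence given by the graph closure alone fails to be compatible with the ring structure on $H^*$ (hence also on $\CH$), and the additional components are exactly what restores multiplicativity. Correspondingly, the result of \cite{Riess13} is not that the graph closure of a birational map between irreducible symplectic varieties induces a ring isomorphism of Chow rings; the result used there is that the cycle $Z$ obtained from a degeneration of isomorphisms along algebraic spaces induces such an isomorphism -- i.e.\ precisely the statement you are trying to prove, so your argument is circular at this point. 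The paper handles this step by citing it directly as the main ingredient of the proof of \cite[Theorem 3.2]{Riess13}, rather than rederiving it.
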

\begin{proof}
  Consider the Graph $\Gamma \subseteq\restr{\cX}{T\setminus\{0\}} \times_{(T\setminus\{0\})}
  \restr{\cX'}{T\setminus\{0\}}$ of the isomorphism  $\restr{\cX}{T\setminus\{0\}} \iso$ \linebreak $
  \restr{\cX'}{T\setminus\{0\}}$ and let $Z\coloneqq \restr{ \overline{\Gamma}}{\cX_0}\subseteq X\times X'$ be the special
  fibre of its closure $\overline{\Gamma}\subseteq\cX \times_{T} \cX'$. Then $[Z]_* \colon H^2(X,\bZ)\to
  H^2(X',\bZ)$ coincides with $\mu$, since $\overline{\Gamma}$ is flat over $T$.
  
  The fact, that $Z_*\colon \CH(X)\to  \CH(X')$ is an isomorphism of graded rings is an earlier result of the
  author. It is the main ingredient of the proof of \cite[Theorem 3.2]{Riess13}.
\end{proof}

  A {\it hyperkähler manifold} is a simply connected compact Kähler manifold $X$, such that $H^0(X, \Omega_X^2)$
  is generated by a nowhere degenerate holomorphic two-form.
Note that by additionally requiring projectivity, one would regain the definition of an irreducible symplectic
variety.

Recall that a {\it marking} of a hyperkähler manifold $X$ is an isometry $g\colon H^2(X,\bZ)\overset{\iso}{\too} \Lambda$
with a fixed lattice $\Lambda$.

\begin{proposition}\label{prop:nonseptodegofiso}
  If two marked irreducible symplectic varieties $(X,g)$ and $(X',g')$ correspond to non-separated points
  in the moduli space of marked hyperkähler manifolds, then they are connected by a degeneration of
  isomorphisms along algebraic spaces $(X,X',\cX,\cX',T)$, such that the parallel transport operator
  coincides with $g'^{-1}\circ g$. 
\end{proposition}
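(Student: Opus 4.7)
The plan is to use local Torelli together with the projectivity of $X$ and $X'$ to construct the desired degeneration inside the intersection of two polarized deformation hypersurfaces.

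First, local Torelli implies that the period map $\cP\colon \dM_\Lambda \to \Omega_\Lambda$ (on the moduli of marked hyperkähler manifolds) is a local biholomorphism. Since $\Omega_\Lambda$ is Hausdorff, the non-separation of $(X,g)$ and $(X',g')$ in $\dM_\Lambda$ forces the periods to agree: $p \coloneqq \cP(X,g) = \cP(X',g')$. Sufficiently small Kuranishi neighborhoods $U \ni (X,g)$ and $U' \ni (X',g')$ map biholomorphically via $\cP$ to a common open neighborhood $W$ of $p$. By the injectivity of $\cP$ on separated points (Verbitsky's global Torelli theorem), further shrinking yields $U \cap U' = U \setminus \{(X,g)\} = U' \setminus \{(X',g')\}$ inside $\dM_\Lambda$: the two neighborhoods agree everywhere except at their centers.

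Next, I build the base. Fix ample line bundles $L \in \Pic(X)$ and $L' \in \Pic(X')$. Both are of type $(1,1)$ with respect to the common period class $\sigma$, so $(g(L),\sigma)_\Lambda = (g'(L'),\sigma)_\Lambda = 0$ and the hyperplanes $g(L)^\perp, g'(L')^\perp \subset \Omega_\Lambda$ both pass through $p$. Their intersection has codimension at most two in $\Omega_\Lambda$, whose dimension $\rank(\Lambda) - 2$ is at least three for all known deformation types, so one can choose a smooth one-dimensional analytic disk $T$ through $p$ lying in both hyperplanes. Lifting $T$ to $U$ and $U'$ and pulling back the Kuranishi families yields analytic families $\cX \to T$ and $\cX' \to T$ with $\cX_0 \iso X$ and $\cX'_0 \iso X'$, on which $L$ and $L'$ extend to relatively ample line bundles (by openness of ampleness, after shrinking $T$). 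Both families are therefore projective over $T$, and via the algebraicity of the moduli space of polarized irreducible symplectic varieties (e.g.\,through the Hilbert scheme), one may realize $\cX$ and $\cX'$ as analytifications of algebraic families over a smooth algebraic curve $T$, producing the required degeneration along algebraic spaces. For $t \in T \setminus \{0\}$, the pairs $(\cX_t, g_t)$ and $(\cX'_t, g'_t)$ (with markings propagated by parallel transport) are separated points of $\dM_\Lambda$ with equal periods, so they represent the same marked variety. This gives $\restr{\cX}{T \setminus \{0\}} \iso \restr{\cX'}{T \setminus \{0\}}$.

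Finally, the associated parallel transport operator is the composition
\begin{equation*}
H^2(X,\bZ) \xrightarrow{\mathrm{PT}_\cX} H^2(\cX_t,\bZ) \iso H^2(\cX'_t,\bZ) \xrightarrow{\mathrm{PT}_{\cX'}^{-1}} H^2(X',\bZ).
\end{equation*}
The markings $g_t = g \circ \mathrm{PT}_\cX^{-1}$ and $g'_t = g' \circ \mathrm{PT}_{\cX'}^{-1}$ agree on $H^2(\cX_t,\bZ) = H^2(\cX'_t,\bZ)$ because the central identification is an isomorphism of marked hyperkählers, and therefore the composition collapses to $g'^{-1} \circ g$, as required. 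The main obstacle is the algebraization step: Kuranishi families are a priori only complex analytic, and upgrading them to algebraic families requires exploiting the projectivity of both $X$ and $X'$ through the polarized deformation hypersurfaces. Once this is in place, the remaining steps—finding $T$, obtaining the marked isomorphism on $T \setminus \{0\}$, and the parallel transport computation—are essentially formal consequences of local and global Torelli together with the Hausdorff property of $\Omega_\Lambda$.
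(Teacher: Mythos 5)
Your argument breaks at the separatedness step, and the break is fatal to the construction rather than cosmetic. Verbitsky's Global Torelli theorem does \emph{not} say that the period map is injective away from the two given points: its fibres over periods orthogonal to a wall (MBM) class consist of several pairwise non-separated points, and these periods form hypersurfaces through $p$. So after shrinking you cannot arrange $U\cap U'=U\setminus\{(X,g)\}$; already for K3 surfaces with a $(-2)$-class the points of $U$ lying over the hyperplane orthogonal to that class are non-separated from, but distinct from, the corresponding points of $U'$, arbitrarily close to the centers. Worse, your curve $T$ is deliberately placed inside $g(L)^\perp\cap g'(L')^\perp$, i.e.\ exactly inside such a special locus: along $T$ the whole sublattice spanned by $g(L)$ and $g'(L')$ stays of type $(1,1)$. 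If, say, $\rho(X)=2$ and $\NS(X)$ is spanned by $L$ and the transport of $L'$ (the typical situation for two distinct birational models, e.g.\ a Mukai flop), then the wall classes responsible for $X\not\iso X'$ remain algebraic on every fibre, the two marked fibres $(\cX_t,g_t)$ and $(\cX'_t,g'_t)$ stay in different Kähler chambers for small $t$, and by Markman's criterion (Theorem \ref{thm:alreadyiso}) there is no isomorphism $\cX_t\iso\cX'_t$ compatible with the markings. Hence the isomorphism over $T\setminus\{0\}$ that you assert ``by equal periods and separatedness'' need not exist at all; ``equal period $\Rightarrow$ equal point'' is only as good as the separatedness claim, which is precisely what fails over your chosen base.

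The proof the paper relies on (the argument of \cite[Proposition 2.1]{Riess13}, following Huybrechts' proof that non-separated points give families isomorphic over a punctured disc, \cite{Huybrechts:HK:basic-results}) makes the opposite trade-off: one deforms along a \emph{generic} direction inside a \emph{single} hyperplane $h^\perp$, with $h$ the class of an ample bundle on $X$, so that the very general fibre has Néron--Severi group of rank one generated by a class of positive square and carries no wall classes; this is what forces $\cX_t\iso\cX'_t$ compatibly with the markings for $t\neq 0$. The price is that on the second family the transported class is only big, not ample, so that family is a priori only Moishezon --- which is exactly why the statement is about degenerations along algebraic spaces rather than projective families, and why an algebraization step is needed there. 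Your proposal secures relative ampleness (hence projectivity) on both sides, but in doing so constrains the periods so strongly that the punctured-disc isomorphism, the heart of the proposition, is lost. To repair it you would have to abandon the two-hyperplane base, keep only one polarization, and then supply both the ``no walls on the general fibre'' argument and the algebraization of the second, merely Moishezon, family.
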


\begin{proof}
Since $(X,g)$ and $(X',g')$
correspond to non-separated points in the moduli space of marked hyperkähler manifolds, we can use the
same proof as for \cite[Proposition 2.1]{Riess13}. 
\end{proof}

 Let $f\colon X' \dashrightarrow X$ be a birational map between projective irreducible symplectic varieties. Since $f$ is defined away from a codimension two set, this induces a
  natural map $f^*\colon H^2(X,\bZ)\to H^2(X',\bZ)$ (see e.g. \cite[Lemma 2.6]{Huybrechts:HK:basic-results}).

\begin{corollary}\label{cor:Gammaf} 
  There exists a cycle $Z_f \in \CH^{2n}(X\times X')$, such that the induced map ${Z_f}_*\colon\CH(X)\to
  \CH(X')$ is an isomorphism of graded rings and the induced map $[Z_f]_*\colon H^2(X,\bZ)\to H^2(X',\bZ)$ coincides with
  $f^*$. This $Z_f$ is obtained from a degeneration of isomorphisms along algebraic spaces as in Theorem
  \ref{thm:cycletodegofiso}. 
\end{corollary}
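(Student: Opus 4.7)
The plan is to combine Proposition \ref{prop:nonseptodegofiso} and Theorem \ref{thm:cycletodegofiso}; the only real input beyond this is Huybrechts' classical fact that birational irreducible symplectic varieties give rise to non-separated points of the moduli space of marked hyperkähler manifolds.

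First I would fix an arbitrary marking $g\colon H^2(X,\bZ)\overset{\iso}{\too}\Lambda$. Since $f\colon X'\dashrightarrow X$ is an isomorphism away from a codimension two subset, $f^*\colon H^2(X,\bZ)\to H^2(X',\bZ)$ is an isometry for the Beauville--Bogomolov forms, so the composition $g'\coloneqq g\circ (f^*)^{-1}\colon H^2(X',\bZ)\to\Lambda$ is also a marking. By construction $g'^{-1}\circ g = f^*$.

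Next I would invoke Huybrechts' result (see e.g.\,\cite{Huybrechts:HK:basic-results}) that for birational projective irreducible symplectic varieties $X$ and $X'$, the marked pairs $(X,g)$ and $(X',g')$ constructed as above represent non-separated points in the moduli space of marked hyperkähler manifolds. Proposition \ref{prop:nonseptodegofiso} then yields a degeneration of isomorphisms along algebraic spaces $(X,X',\cX,\cX',T)$ whose associated parallel transport operator is precisely $g'^{-1}\circ g = f^*$. Finally, applying Theorem \ref{thm:cycletodegofiso} to this degeneration produces a cycle $Z_f\in \CH^{2n}(X\times X')$ such that ${Z_f}_*\colon\CH(X)\to\CH(X')$ is an isomorphism of graded rings and $[Z_f]_*= f^*$ on $H^2$, as required.

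The only step with any subtlety is the identification of the parallel transport operator with $f^*$: one must verify that the marking $g'$ chosen above does indeed place $(X',g')$ as a non-separated point relative to $(X,g)$, which is exactly the content of Huybrechts' theorem on birational hyperkähler manifolds. Everything else is a formal assembly of results already available in the paper.
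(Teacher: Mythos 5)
Your proposal is correct and follows essentially the same route as the paper: fix a marking $g$, define $g'\coloneqq g\circ(f^*)^{-1}$, invoke Huybrechts' result (the paper cites \cite[Theorem 4.6']{Huybrechts:HK:basic-results}) that $(X,g)$ and $(X',g')$ are non-separated in the moduli space of marked hyperkähler manifolds, and then apply Proposition \ref{prop:nonseptodegofiso} followed by Theorem \ref{thm:cycletodegofiso}, noting $g'^{-1}\circ g=f^*$. No gaps; this matches the paper's argument.
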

\begin{proof}
  Fix a marking $g\colon H^2(X,\bZ) \overset{\iso}{\too} \Lambda$. Note that $f^*$ induces a marking $g'$ of $X'$ by setting
  $g'\coloneqq g\circ(f^*)^{-1}$. 
  Then \cite[Theorem 4.6']{Huybrechts:HK:basic-results} states that $(X,g)$ and $(X',g')$ correspond to
  non-separated points in the moduli space of marked hyperkähler manifolds.
  By Proposition \ref{prop:nonseptodegofiso}, $(X,g)$ and $(X',g')$ are connected by degeneration of
  isomorphism along algebraic spaces, such that the parallel transport $\mu\colon H^2(X,\bZ)\to H^2(X',\bZ)$
  coincides with $g'^{-1}\circ g$. Then Theorem \ref{thm:cycletodegofiso} provides a cycle $Z_f$ satisfying
  $[Z_f]=\mu=g'^{-1}\circ g= f^*\circ g^{-1}\circ g =f^* \colon H^2(X,\bZ)\to H^2 (X',\bZ)$.
\end{proof}

With the same methods, we can adapt a result of Huybrechts to our purposes:
\begin{proposition}\label{prop:alphaintoK}
  Let $\alpha \in \cC_X$ be a general element in $\cC_X$. Then there exists a birational irreducible symplectic
  variety $X'$, and a cycle 
  $Z\in \CH^{2n}(X\times X')$ such that $[Z]_*(\alpha)\in H^{1,1}(X',\bR)$ is a Kähler
  class. This cycle $Z$ is obtained from a degeneration of isomorphisms of algebraic spaces as in
  Theorem \ref{thm:cycletodegofiso}.
\end{proposition}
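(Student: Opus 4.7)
The plan is to combine Huybrechts' chamber decomposition of the positive cone with Corollary \ref{cor:Gammaf} (which is exactly the degeneration-of-isomorphisms machinery set up earlier in this section). By Huybrechts' analysis of the positive cone of a projective irreducible symplectic variety (see \cite[Section 7]{Huybrechts:HK:basic-results}), outside a countable union of proper (real-analytic) walls, every class $\alpha\in\cC_X$ lies in a chamber of the form $g^*(\cK_{X'})$ for some birational map $g\colon X\dashrightarrow X'$ to a projective irreducible symplectic variety $X'$. Hence for general $\alpha\in\cC_X$ we may write $\alpha = g^*\beta$ for some Kähler class $\beta\in\cK_{X'}\subseteq H^{1,1}(X',\bR)$.

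Setting $f\coloneqq g^{-1}\colon X'\dashrightarrow X$, I would then apply Corollary \ref{cor:Gammaf} to $f$ to obtain a cycle $Z\coloneqq Z_f\in\CH^{2n}(X\times X')$ which by construction comes from a degeneration of isomorphisms along algebraic spaces as in Theorem \ref{thm:cycletodegofiso}, and which satisfies $[Z]_*=f^*\colon H^2(X,\bZ)\to H^2(X',\bZ)$. A direct computation then yields
\[
[Z]_*(\alpha) \;=\; f^*(g^*\beta) \;=\; (g\circ f)^*\beta \;=\; \beta \;\in\; \cK_{X'},
\]
which is the required Kähler class on $X'$, finishing the proof.

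The only step that requires genuine care is the first one: I need Huybrechts' decomposition in precisely the form that the chambers $g^*(\cK_{X'})$, as $g$ ranges over birational maps from $X$ to \emph{projective} irreducible symplectic varieties, jointly cover $\cC_X$ up to a meagre subset, so that "general" really does land $\alpha$ in such a chamber. Once this is granted, everything else is a one-line application of Corollary \ref{cor:Gammaf} combined with the identity $f^*\circ g^* = \id$ on $H^2$.
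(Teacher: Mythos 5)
Your second step (applying Corollary \ref{cor:Gammaf} to $f=g^{-1}$ and computing $[Z_f]_*(\alpha)=f^*g^*\beta=\beta$) is fine, but the first step contains a genuine gap: it is not true that the chambers $g^*(\cK_{X'})$, with $g$ ranging over birational maps to irreducible symplectic varieties, cover $\cC_X$ up to a meagre subset. Their union is by definition the birational Kähler cone $\cB\cK_X$, and by Proposition \ref{prop:birKbarFeC} its closure is cut out inside $\overline{\cC_X}$ by the inequalities $(\alpha,d)_q\geq 0$ for all prime exceptional classes $d\in \UR_X$. As soon as $X$ carries a prime exceptional divisor (e.g.\ $\Hilb^n$ of a K3 surface with the exceptional divisor of the Hilbert--Chow morphism), the set $\{\alpha\in\cC_X \mid (\alpha,d)_q<0\}$ is a nonempty open subcone disjoint from every chamber $g^*(\cK_{X'})$, so a general $\alpha$ there cannot be written as $g^*\beta$ with $\beta$ Kähler on a birational model. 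To sweep out all of $\cC_X$ one needs, in addition, the reflections in prime exceptional divisors (Markman's $\Refl\subseteq\MonHdg(X)$), which is exactly why the paper proves Proposition \ref{prop:RefltoBK} separately. Note also that the full strength of the statement is needed later: in Proposition \ref{prop:MonOKonChow} the class $\mu^{-1}(\alpha)$ for $\mu\in\MonHdg(X)$ is a general element of $\cC_X$ that has no reason to lie in $\birKbar_X$, so your weaker version would not suffice there.

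The paper's own argument avoids this by not going through birational pullbacks at all: it invokes \cite[Corollary 5.2]{Huybrechts:HK:basic-results}, which for general $\alpha\in\cC_X$ produces a marked pair $(X',g')$ non-separated from $(X,g)$ in the moduli space of marked hyperkähler manifolds with $g'^{-1}\circ g(\alpha)$ Kähler; here the parallel transport operator $g'^{-1}\circ g$ is in general \emph{not} of the form $f^*$ for a birational $f$ (it may incorporate reflections in exceptional classes), which is precisely what lets it move classes from outside $\cB\cK_X$ into a Kähler cone. Since Huybrechts' construction uses non-algebraic twistor families, the paper then replaces it by a degeneration of isomorphisms along algebraic spaces via Proposition \ref{prop:nonseptodegofiso} and extracts the cycle $Z$ from Theorem \ref{thm:cycletodegofiso}. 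If you want to salvage your approach, you would have to first move $\alpha$ into $\birKbar_X$ by an element of $\Refl$ and realize that reflection by a cycle acting correctly on $H^2$ and on $\CH$ --- but in the paper that step (Proposition \ref{prop:MonOKonChow}) is itself deduced from the present proposition, so arguing this way would be circular.
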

\begin{proof}
Fix a marking $g\colon H^2(X,\bZ) \iso \Lambda$. From \cite[Corollary 5.2]{Huybrechts:HK:basic-results}, there exists a marked
irreducible symplectic variety $(X',g')$ such that $g'^{-1}\circ g(\alpha)$ is Kähler. In the original
proof, this is constructed as
a degeneration of isomorphism, but the involved families are twistor families, which are
non-algebraic. For the proof of this proposition we need to find alternative families, which are realizations of
algebraic spaces.

 Since $(X,g)$ and $(X',g')$ are connected by a degeneration of isomorphisms, they
correspond to non-separated points in the moduli space of marked hyperkähler manifolds and we can apply
Proposition \ref{prop:nonseptodegofiso} to see that they are indeed connected by a degeneration of
isomorphisms along algebraic spaces. In particular the parallel transport operator $\mu\colon H^2(X,\bZ)\to H^2(X',\bZ)$
coincides with $g'^{-1}\circ g$. Theorem \ref{thm:cycletodegofiso} then yields a cycle $Z$ such that
$[Z]_*(\alpha)=\mu(\alpha)=g'^{-1}\circ g(\alpha)$ is a Kähler class.
\end{proof}

\begin{remark}
  Indeed, Huybrechts stated the more explicit condition, that
  Proposition \ref{prop:alphaintoK} holds for every $\alpha\in H^{1,1}(X,\bR)$, which satisfies
  $(\alpha,\beta)_q\neq 0$ for all integral 
  classes $\beta \in H^2(X,\bZ)$ (see \cite[p.\,503]{Huybrechts2003}).
  
  From the more recent work of Mongardi (see \cite{Mongardi13}), one can see that it even works for any
  $\alpha \in \cC_X$, which is not orthogonal to a wall divisor.
\end{remark}

Further, recall the following theorem of Markman:
\begin{theorem}[{\cite[Theorem 1.3]{Markman11}}] \label{thm:alreadyiso}
  Let $\mu \colon H^2(X,\bZ) \overset{\iso}{\too} H^2(X',\bZ)$ be a parallel transport operator, which is an isomorphism of
  integral Hodge structures. Then $\mu$ maps some Kähler
  class on $X$ again into the Kähler cone if and only if there exists an isomorphism $f\colon X'\to X$,
  such that $\mu=f^*\colon H^2(X,\bZ) \to H^2(X',\bZ)$.
\end{theorem}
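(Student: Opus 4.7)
The reverse implication is immediate: if $f\colon X'\to X$ is an isomorphism, then $f^*$ carries $\cK_X$ bijectively onto $\cK_{X'}$, so any K\"ahler class is sent to a K\"ahler class. For the forward implication my plan is to combine Verbitsky's global Torelli theorem with the chamber structure of the birational K\"ahler cone. I would fix a marking $g\colon H^2(X,\bZ)\overset{\iso}{\too}\Lambda$ and transport it through $\mu$ to obtain a marking $g'\coloneqq g\circ\mu^{-1}$ of $X'$. That $\mu$ is a parallel transport operator ensures that $(X,g)$ and $(X',g')$ lie in the same connected component of the moduli space of marked hyperk\"ahler manifolds, while the Hodge isometry condition places them over a common period point.

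I would then invoke the (non-Hausdorff) global Torelli theorem: two such marked pairs either coincide in the moduli space, in which case unravelling the definition of equivalence directly produces an isomorphism $f\colon X'\to X$ with $f^*=\mu$, or they represent pairwise non-separated points. In the latter case, Huybrechts' description of non-separated points (in the spirit of Proposition \ref{prop:nonseptodegofiso} and Corollary \ref{cor:Gammaf}) only yields a strictly birational map from $X'$ to $X$ inducing $\mu$ on $H^2$. To rule this case out, I would exploit the K\"ahler hypothesis: there is some $\omega\in\cK_X$ with $\mu(\omega)\in\cK_{X'}$, so that $g(\omega)=g'(\mu(\omega))$ lies in both transported K\"ahler cones $g(\cK_X)$ and $g'(\cK_{X'})$ inside $\Lambda_\bR$.

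The decisive input, and what I expect to be the main obstacle, is the fact that the K\"ahler cones of pairwise non-separated marked hyperk\"ahler manifolds over a fixed period are pairwise disjoint: they partition the birational K\"ahler cone into chambers separated by wall divisors. This is the substantive part of Markman's work and cannot be bypassed by purely cohomological considerations, since non-separated but distinct marked pairs are always Hodge-isometric via parallel transport; only the wall-and-chamber structure distinguishes genuine isomorphisms from mere birationalities. Granting it, the common class $g(\omega)$ forces $g(\cK_X)=g'(\cK_{X'})$, which in turn collapses the two non-separated points in the moduli space into one, i.e.\ $(X,g)=(X',g')$, delivering the desired isomorphism $f\colon X'\to X$ with $f^*=\mu$.
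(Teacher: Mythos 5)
This statement is not proved in the paper at all: it is quoted verbatim as Markman's result (\cite[Theorem 1.3]{Markman11}) and used as a black box, so there is no internal proof to compare yours against. Judged on its own terms, your sketch does follow the actual route taken in the literature: the easy direction is handled correctly, and the forward direction is indeed a consequence of Verbitsky's global Torelli theorem (two marked pairs in one connected component over the same period point are equal or non-separated) together with the fact that the marked K\"ahler cones of the inseparable points over a fixed period form disjoint chambers. You have correctly identified where the real content lies.

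However, as submitted this is not a proof but a reduction: the step you yourself call the ``decisive input'' --- disjointness of the K\"ahler chambers of non-separated marked pairs --- is precisely the substance of Markman/Verbitsky and is left unargued, so the gap you flag is genuine and not a formality. One remark that would let you close the argument with results the paper already uses rather than with the full chamber decomposition: once non-separatedness gives you a birational map $\phi\colon X'\dashrightarrow X$ inducing $\mu$ on $H^2$ (Huybrechts' Theorem 4.6$'$, as invoked in Corollary \ref{cor:Gammaf}), the hypothesis that $\mu$ carries some K\"ahler class to a K\"ahler class puts you in the situation of Huybrechts' criterion that a bimeromorphic map between compact hyperk\"ahler manifolds whose action on $H^2$ sends a K\"ahler class to a K\"ahler class extends to a biregular isomorphism; this yields $f$ directly without quantifying over all inseparable points in the fibre. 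Even so, both that criterion and Verbitsky's theorem remain external deep inputs, so your write-up should cite them explicitly rather than present the statement as established.
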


Finally, we can prove the following:
\begin{proposition}\label{prop:MonOKonChow}
  Let $\mu\in \MonHdg(X)$. Then there exists a cycle $Z_\mu\in\CH^{2n}(X\times X)$, such that 
  \begin{enumerate}[(a)]
  \item \label{it:OKonH2} The induced map $[Z_\mu]_*\colon H^2(X,\bZ)\to H^2(X,\bZ)$ coincides with $\mu$.
  \item \label{it:OKonCH} The associated map ${Z_\mu}_*\colon\CH(X)\to \CH(X)$ is an isomorphism of graded rings. 
  \end{enumerate}
\end{proposition}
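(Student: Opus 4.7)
The plan is to reduce to Markman's Theorem \ref{thm:alreadyiso} by pre-composing $\mu$ with a parallel transport operator coming from a suitable degeneration of isomorphisms, and then to realize $\mu$ on $\CH$ by composing the cycles produced by Theorem \ref{thm:cycletodegofiso} and Corollary \ref{cor:Gammaf}.

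First, I would pick a Kähler class $\alpha\in \cK_X$ sufficiently generic so that $\mu(\alpha)$ is a general element of $\cC_X$ in the sense of Proposition \ref{prop:alphaintoK}; this is possible because $\mu\in\MonHdg(X)$ preserves $q$ and the Hodge decomposition, hence maps the open subset $\cK_X\subseteq \cC_X\cap H^{1,1}(X,\bR)$ into $\cC_X$. Applying Proposition \ref{prop:alphaintoK} to $\mu(\alpha)$ produces a birational irreducible symplectic variety $X'$ and a cycle $W'\in\CH^{2n}(X\times X')$ obtained from a degeneration of isomorphisms along algebraic spaces, such that $\nu\coloneqq[W']_*$ sends $\mu(\alpha)$ to a Kähler class on $X'$. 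Since non-separated points in the moduli space of marked hyperkähler manifolds share the same period, $\nu$ is already an isomorphism of integral Hodge structures, and therefore $\nu\circ\mu\colon H^2(X,\bZ)\to H^2(X',\bZ)$ is a parallel transport operator of integral Hodge structures sending the Kähler class $\alpha$ to a Kähler class on $X'$. Markman's Theorem \ref{thm:alreadyiso} then yields an isomorphism $f\colon X'\to X$ with $\nu\circ\mu=f^*$, whence $\mu=\nu^{-1}\circ f^*$.

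To realize $\mu$ by a cycle, I would apply Corollary \ref{cor:Gammaf} to the (in particular birational) isomorphism $f$ to obtain $W_2\in\CH^{2n}(X\times X')$ with $[W_2]_*=f^*$ on $H^2$ and $(W_2)_*$ a graded-ring isomorphism $\CH(X)\overset{\iso}{\to}\CH(X')$. Reversing the roles of the two families in the underlying degeneration of isomorphisms from Proposition \ref{prop:alphaintoK} yields another degeneration of isomorphisms along algebraic spaces whose parallel transport operator is $\nu^{-1}$; Theorem \ref{thm:cycletodegofiso} then produces $W_1\in\CH^{2n}(X'\times X)$ with $[W_1]_*=\nu^{-1}$ on $H^2$ and $(W_1)_*$ a graded-ring isomorphism $\CH(X')\overset{\iso}{\to}\CH(X)$. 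Setting $Z_\mu\coloneqq W_1\circ W_2\in\CH^{2n}(X\times X)$ (composition of correspondences), condition (\ref{it:OKonH2}) follows from $[Z_\mu]_*=[W_1]_*\circ[W_2]_*=\nu^{-1}\circ f^*=\mu$, and condition (\ref{it:OKonCH}) from the fact that $(Z_\mu)_*=(W_1)_*\circ(W_2)_*$ is a composition of graded-ring isomorphisms.

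The main obstacle is the Markman step: one has to verify that $\nu\circ\mu$ is a parallel transport operator of \emph{integral} Hodge structures and that it maps \emph{some} Kähler class of $X$ into the Kähler cone of $X'$. Both are secured by choosing $\alpha$ generically Kähler and exploiting Hodge-structure preservation of $\nu$, which in turn rests on the moduli-theoretic characterisation of degenerations of isomorphisms. Once these points are in place, the remainder of the argument is essentially formal composition of correspondences, for which Theorem \ref{thm:cycletodegofiso} and Corollary \ref{cor:Gammaf} provide exactly the right building blocks.
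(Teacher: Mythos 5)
Your argument is correct and is essentially the paper's proof: both pick a general Kähler class, use Proposition \ref{prop:alphaintoK} and Markman's Theorem \ref{thm:alreadyiso} to produce an isomorphism $f\colon X'\to X$, and then realize $\mu$ on $H^2$ and on $\CH$ by composing correspondences supplied by Theorem \ref{thm:cycletodegofiso}. The only (inessential) difference is that the paper applies Proposition \ref{prop:alphaintoK} to $\mu^{-1}(\alpha)$, so that $\mu=(f^*)^{-1}\circ[Z]_*$ is realized directly by the graph of $f$ composed with $Z$, whereas you apply it to $\mu(\alpha)$, obtain $\mu=\nu^{-1}\circ f^*$, and hence realize $\nu^{-1}$ by reversing the degeneration of isomorphisms and handle $f$ via Corollary \ref{cor:Gammaf} --- a valid permutation of the same ingredients.
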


\begin{proof} 
  Choose a general element $\alpha\in \cK_X$ and consider its preimage $\mu^{-1}(\alpha) \in \cC_X$.
  Since $\alpha$ was chosen general, also $\mu^{-1}(\alpha)$ is a general element of $\cC_X$. 
  Using Proposition \ref{prop:alphaintoK} we observe
  that there exists a cycle $Z \in \CH^{2n}(X\times X')$, for some $X'$ birational to $X$,  which comes
  from a degeneration of isomorphisms along algebraic spaces as in Theorem \ref{thm:cycletodegofiso}, and
  such that the induced map $[Z]_*\colon H^2(X,\bR)\to H^2(X',\bR)$ satisfies
  $[Z]_*(\mu^{-1}(\alpha))\in \cK_{X'}$. 
  Since $\mu^{-1}$ and $[Z]_*$ are parallel transport operators, also the composition $[Z]_* \circ \mu^{-1}$ is a
  parallel transport operator. By 
  construction, this operator maps a Kähler class $\alpha \in \cK_X$ to the Kähler class
  $[Z]_*(\mu^{-1}(\alpha))\in \cK_{X'}$. Therefore, we can apply Theorem \ref{thm:alreadyiso} to see
  that there exists an isomorphism $f\colon X' \to X$ such that $f^*=[Z]_*\circ \mu^{-1}\colon H^2(X,\bZ)\to H^2(X,\bZ)$. The
  graph $\Gamma_f \in \CH^{2n}(X'\times X)$ of $f$ induces an isomorphism of
  Chow rings and 
  such that $f^*=[\Gamma_f]^*\colon H^2(X,\bZ)\to H^2(X',\bZ)$.
  Finally, we can observe that 
  \begin{equation*}
    \mu={f^*}^{-1}\circ [Z]_*= [\Gamma_f]_*\circ [Z]_* = [\Gamma_f\circ Z]_*\colon H^2(X,\bZ)\to H^2(X,\bZ).
  \end{equation*}
  Setting $Z_\mu\coloneqq \Gamma_f\circ Z$ immediately implies that condition \eqref{it:OKonH2} holds.

  In order to verify condition \eqref{it:OKonCH}, check it separately for $Z_*$ and ${\Gamma_f}_*$. For
  $Z_*$ this follows from Proposition \ref{prop:alphaintoK} together with Theorem
  \ref{thm:cycletodegofiso}. Conclude the proof by noticing that ${\Gamma_f}_*=f_*$ is push-forward along
  an isomorphism. 
\end{proof}

In the following we introduce notations and recall some results in order to state and prove Proposition
\ref{prop:RefltoBK}, which we will need for the proof of Proposition \ref{prop:ConjQeqConjBK}. 

\begin{definition}
  \begin{enumerate}
  \item A divisor $D \in \Div(X)$ is called {\it prime exceptional divisor} if $D$ is reduced and
    irreducible and satisfies $q(D)<0$.
  \item Denote by $\UR_X \subseteq H^{1,1}(X,\bZ)$ the set of classes of prime exceptional divisors. 
  \item For $d \in \UR_X$ define the associated reflection $R_d\in O(H^2(X, \bQ))$ as
    $R_d(\alpha)\coloneqq  \alpha - \frac{2(d,\alpha)_q}{q(d)}  d$.
  \item Let $\Refl \subseteq O(H^2(X,\bQ))$ be the subgroup generated by $\{R_d\mid d\in \UR\}$. 
  \end{enumerate}
\end{definition}

\begin{remark}
By a result of Boucksom (see \cite[Proposition 4.7]{Boucksom04}), the prime exceptional divisors are exactly the uniruled
divisors with negative Beauville--Bogomolov square.
\end{remark}
Using this observation we can reformulate a result of Huybrechts:
\begin{proposition}[{\cite[Proposition 4.2]{Huybrechts2003}}]\label{prop:birKbarFeC} 
  The following cones coincide:
  $$\birKbar_X=\{\alpha\in \overline{\cC_X}\subseteq H^2(X,\bR)\mid (\alpha,d)\geq 0 \ \forall d\in \UR_X \}.$$
\end{proposition}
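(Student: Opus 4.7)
The plan is to deduce this cone identity directly from Huybrechts's original \cite[Proposition 4.2]{Huybrechts2003}, which characterises $\birKbar_X$ as the set of $\alpha\in\overline{\cC_X}$ satisfying $(\alpha,[D])_q\geq 0$ for every uniruled divisor $D\subset X$. My task therefore reduces to replacing the condition ``for every uniruled divisor'' by the potentially weaker one ``for every $d\in\UR_X$'', i.e.\ for every uniruled divisor of \emph{negative} Beauville--Bogomolov square. This reduction will hinge on Boucksom's theorem (recalled in the preceding remark), which identifies the prime exceptional divisors with exactly the uniruled divisors of negative $q$-square.

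For the inclusion $\birKbar_X\subseteq\{\alpha\in\overline{\cC_X}\mid (\alpha,d)_q\geq 0\ \forall d\in\UR_X\}$ the argument is essentially formal. I would take $\alpha=f^*\omega$ with $\omega\in\cK_{X'}$ for some birational $f\colon X\dashrightarrow X'$. Since $f$ is an isomorphism in codimension two, $f^*$ is an isometry for $(\,,\,)_q$, and the strict transform of a prime exceptional $D\subset X$ yields an effective divisor $D'\subset X'$ whose class agrees with $(f^{-1})^*[D]$. The Fujiki relation then gives $(\alpha,[D])_q=(\omega,[D'])_q\geq 0$, because Kähler classes pair non-negatively with effective divisor classes under the Beauville--Bogomolov form. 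Passing to closures extends the inequality to all of $\birKbar_X$.

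For the reverse inclusion, let $\alpha\in\overline{\cC_X}$ satisfy $(\alpha,d)_q\geq 0$ for every $d\in\UR_X$, and let $D\subset X$ be any uniruled divisor; I need $(\alpha,[D])_q\geq 0$ to invoke Huybrechts's criterion. If $q([D])<0$, Boucksom's theorem identifies $D$ as prime exceptional, so $[D]\in\UR_X$ and the pairing is non-negative by hypothesis. The only genuinely delicate case, and the main obstacle to flag, is $q([D])\geq 0$: one must exclude the possibility that $[D]$ lies in the anti-positive cone $-\overline{\cC_X}$ rather than in $\overline{\cC_X}$. For this I would pair $[D]$ with an ample class $\omega$; effectiveness of $D$ together with the Fujiki formula forces $(\omega,[D])_q>0$, which together with $q([D])\geq 0$ and the signature $(1,\rho(X)-1)$ of $q$ on $\NS(X)_\bR$ places $[D]$ in the component of $\{q\geq 0\}$ containing $\omega$, namely $\overline{\cC_X}$. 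Self-duality of the closed positive cone under a Lorentzian form then gives $(\alpha,[D])_q\geq 0$ for all $\alpha\in\overline{\cC_X}$, completing the verification of Huybrechts's condition and hence the proposition.
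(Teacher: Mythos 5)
Your proposal is correct and follows exactly the route the paper intends: the paper gives no proof of its own, simply citing Huybrechts's \cite[Proposition 4.2]{Huybrechts2003} (stated for uniruled divisors) and the preceding remark on Boucksom's identification of prime exceptional divisors with negative-square uniruled divisors. Your argument --- in particular the observation that uniruled divisors with $q([D])\geq 0$ lie in $\overline{\cC_X}$ and hence impose no condition by self-duality of the closed positive cone --- is precisely the verification the paper leaves implicit in the word ``reformulate''.
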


For the proof of Proposition \ref{prop:RefltoBK}, we will still need the following result of Markman:
\begin{proposition}[{\cite[Proposition 6.2]{Markman11}}]
  For any $d\in \UR_X$, the reflection $R_d$ restricts to an integral morphism. As such, it is an
  element of $\MonHdg(X)$. In particular, there is an inclusion $\Refl \subseteq \MonHdg(X)$.
\end{proposition}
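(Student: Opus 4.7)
The plan is to prove the three assertions in turn: integrality of $R_d$ on $H^2(X,\bZ)$, membership of $R_d$ in $\MonHdg(X)$, and the consequent inclusion $\Refl\subseteq\MonHdg(X)$. The Hodge-preservation part of the second assertion is automatic, since $d\in H^{1,1}(X,\bZ)$ and the bilinear form $(\,,\,)_q$ is integral and Hodge-invariant, so the formula $R_d(\alpha)=\alpha-\tfrac{2(d,\alpha)_q}{q(d)}d$ sends each Hodge summand to itself. The inclusion in the last assertion is also immediate once membership is established, because $\MonHdg(X)$ is a subgroup of $O(H^2(X,\bZ))$ containing all generators $R_d$.

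For the \textbf{integrality} statement one needs $q(d)\mid 2(d,\alpha)_q$ for every $\alpha\in H^2(X,\bZ)$. The approach I would take is a deformation argument: since $D$ is uniruled by Boucksom's result recalled above, the class $d$ stays algebraic along suitable Hodge-theoretic deformations that preserve $D$, and one can produce a smooth projective family through $X$ whose generic fibre has Picard rank one, generated by the parallel transport of $d$. On such a fibre the integrality of $R_d$ reduces to a purely lattice-theoretic divisibility statement about the primitive embedding $\bZ d\hookrightarrow H^2(X_t,\bZ)$, and by constancy of $H^2$ under parallel transport the conclusion transfers back to $X$.

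For the \textbf{monodromy} statement, Boucksom's theorem again provides that $D$ is uniruled; Namikawa-- and Wierzba-type contraction results for symplectic varieties then imply that, after a suitable deformation, $D$ contracts to a codimension-two subvariety, producing a Mukai-type flop $X\dashrightarrow X'$ to a birational hyperkähler manifold. Assembling this flop over the base of the deformation yields a degeneration of isomorphisms along algebraic spaces in the sense of Section~\ref{sec:redtobirKbar}, and the associated parallel transport operator on $H^2(X,\bZ)$, evaluated on a Picard-rank-one nearby fibre where the action is transparent, acts as $-1$ on $d$ and as $+1$ on $d^{\bot}$, which is precisely the defining formula of $R_d$. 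The hard step is this last one --- realising $R_d$ as a genuine parallel transport operator --- which requires the full contraction/flop machinery for prime exceptional divisors and constitutes the main work of Markman's proof.
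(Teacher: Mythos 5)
This proposition is not proved in the paper at all: it is imported verbatim as Markman's Proposition 6.2, so the only meaningful comparison is with Markman's own argument, of which your sketch captures the general flavour (deform the pair $(X,D)$, contract the divisor, realise $R_d$ by transport in a family) but misstates the two steps that carry all the content. First, integrality. There is no ``purely lattice-theoretic divisibility statement'' to reduce to: for an arbitrary class of negative square in $H^{1,1}(X,\bZ)$ the reflection is simply not integral, and the divisibility $q(d)\mid 2(d,\alpha)_q$ for all $\alpha\in H^2(X,\bZ)$ is a geometric theorem, proved by analysing the generic fibre of the divisorial contraction of (the deformation of) $D$ (a single rational curve, or a pair of incident rational curves, giving exactly the admissible values of $q(d)$ and of the divisibility). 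Moreover, your auxiliary family cannot exist as stated: a fibre whose Picard group is generated by the parallel transport of $d$, with $q(d)<0$, has negative definite Néron--Severi lattice and hence is not projective; Markman necessarily works with non-projective marked deformations with cyclic Picard group, so ``smooth projective family'' has to be abandoned.

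Second, the monodromy step as you describe it would fail. A prime exceptional divisor is removed by a \emph{divisorial contraction} to a singular symplectic variety $Y$ (Druel, after passing to a birational model), not by a Mukai-type flop; and, more decisively, no degeneration of isomorphisms between birational models can produce $R_d$: the associated parallel transport operator is the pullback along a birational map, hence preserves effective divisor classes, whereas $R_d$ sends the effective class $d$ to $-d$. Markman's actual mechanism is different: one deforms the contracted variety $Y$, observes that the resolved and the singular deformations fit into a Brieskorn-type picture (the higher-dimensional analogue of a $(-2)$-curve on a K3 surface), and obtains $R_d$ as the monodromy of the smooth family around the discriminant in $\Def(Y)$, which is then transported back to $X$ along the deformation of the pair. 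So while your division of the statement into Hodge-preservation (which is indeed immediate, since $d$ is of type $(1,1)$ and $(\,,\,)_q$ is Hodge-compatible) plus integrality plus monodromy is sensible, both nontrivial parts rest on steps that are either unfounded or provably impossible in the form you give them.
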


\begin{proposition}\label{prop:RefltoBK}
  Let $0\neq \alpha \in \NS(X)_\bQ\cap \del\cC_X$  then there exists $R\in \Refl$ such that $R(\alpha)\in
  \birKbar_X$.
\end{proposition}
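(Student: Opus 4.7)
\emph{Plan.} The idea is to iteratively reflect $\alpha$ against walls $d^\perp$ ($d\in\UR_X$) that separate it from $\birKbar_X$, and to force termination via a rational height function.

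If $\alpha \in \birKbar_X$, take $R = \id$. Otherwise, Proposition~\ref{prop:birKbarFeC} produces some $d \in \UR_X$ with $(d,\alpha)_q < 0$, and I set $\alpha' \coloneqq R_d(\alpha)$. Since $R_d \in \Refl \subseteq \MonHdg(X)$ preserves the positive cone, $\alpha' \in \overline{\cC_X}$; since $R_d$ restricts to an integral automorphism of $H^2(X,\bZ)$, also $\alpha' \in \NS(X)_\bQ$ with the same denominator as $\alpha$, and $q(\alpha') = 0$. The iteration thus stays inside $\NS(X)_\bQ \cap \del\cC_X \setminus \{0\}$, and I continue as long as the current class is not in $\birKbar_X$.

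To control the iteration, I fix a rational ample class $h \in \cK_X \cap \NS(X)_\bQ$ and use the height $\phi(\beta) \coloneqq (h,\beta)_q$. A short computation gives
\begin{equation*}
  \phi(R_d(\beta)) = \phi(\beta) - \frac{2\,(d,\beta)_q\,(d,h)_q}{q(d)}.
\end{equation*}
The polarised Fujiki relation identifies $(h,d)_q$ (up to a positive factor depending on $q(h)^{n-1}$) with the intersection number $\int h^{2n-1}\cdot d$, which is positive because $h$ is ample and $d$ is effective. Combined with $q(d)<0$ and $(d,\beta)_q<0$ at the moment of reflection, the correction is strictly positive, so $\phi$ strictly decreases at each step.

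Termination is then a lattice/rationality argument. Since $h^\perp \cap \NS(X)_\bR$ is negative-definite for $q$, one has $\phi(\beta) > 0$ for every $\beta \in \overline{\cC_X}\setminus\{0\}$, so all iterates satisfy $\phi(\alpha^{(i)}) > 0$. All $\alpha^{(i)}$ lie in a fixed lattice $\tfrac{1}{N}\NS(X)$ (determined by the denominator of $\alpha$), so the values $\phi(\alpha^{(i)})$ lie in $\tfrac{1}{M}\bZ$ for a fixed $M$; a strictly decreasing sequence in $\tfrac{1}{M}\bZ \cap (0,\phi(\alpha)]$ is necessarily finite. At termination no $d \in \UR_X$ gives $(d,\alpha^{(n)})_q<0$, so Proposition~\ref{prop:birKbarFeC} places $\alpha^{(n)}$ in $\birKbar_X$, and the composition of the reflections used is the sought $R\in\Refl$. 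The only substantive input is the Fujiki-based positivity $(h,d)_q>0$ for $h$ ample and $d$ effective; the remaining steps are bookkeeping about the monodromy invariance of $\overline{\cC_X}$ and the integrality of $\Refl$ acting on $\NS(X)$.
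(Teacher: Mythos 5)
Your proof is correct and follows essentially the same route as the paper: reflect repeatedly in prime exceptional divisors $d$ with $(d,\alpha)_q<0$ (using Proposition~\ref{prop:birKbarFeC}), observe that the pairing with a fixed ample class strictly decreases at each step, and terminate by discreteness of the values. The only cosmetic differences are that the paper first rescales $\alpha$ to an integral class so the heights are positive integers (rather than your bounded-denominator rationals), and it simply cites ampleness for $(d,h)_q>0$ where you invoke the polarized Fujiki relation.
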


\begin{proof} This proof is similar to the analogue for K$3$ surfaces, as presented in \cite{HuybrechtsK3}.

  By passing to a multiple of $\alpha$, we may assume that $\alpha \in H^2(X,\bZ)$ is an integral
  element.   Set $\alpha_0\coloneqq \alpha$.
  Fix an ample class $h\in H^2 (X,\bZ)$.
  For any element $\alpha_i\in H^2(X,\bZ)\cap \del\cC_X$, the Beauville--Bogomolov pairing
  $(\alpha_i,h)_q$ is a positive integer. If $\alpha_i\notin \birKbar_X$, then by Proposition
  \ref{prop:birKbarFeC} there exists $d_i\in \UR_X$ with $(\alpha_i,d_i)_q <0$.
  Note that $(d_i,h)>0$ and $(\alpha_i,h)>0$, since $h$ is ample. 
  Set $\alpha_{i+1}\coloneqq R_{d_i}(\alpha_i)$, and observe that
  \begin{align*}
    (\alpha_{i+1},h)_q
    &= \big(R_{d_i}(\alpha_i),h \big)_q
    = \big(\alpha_i- \frac{2(d_i,\alpha_i)_q}{q(d_i)}  d_i\, ,h \big)_q
    =(\alpha_i,h)_q - 2\underbrace{\frac{2(d_i,\alpha_i)_q}{q(d_i)}\cdot (d,h)_q}_{>0} \\[-1em]
    &< (\alpha_i,h)_q \,.
  \end{align*}
  If $\alpha_{i+1}\notin \birKbar$, repeat the above for $\alpha_{i+1}$. 
  Since $(\alpha_0,h)> (\alpha_1,h)>(\alpha_2,h)> \dots$ is a descending sequence of positive integers,
  this procedure needs to stop for some $k\in \bN$, which implies that $\alpha_k\in \birKbar_X$. Set
  $R\coloneqq  R_{d_k}\circ R_{d_{k-1}} \circ \dots \circ R_{d_0}\in \Refl$. This concludes the proof, since
  $R(\alpha)=\alpha_k \in \birKbar_X$. 
\end{proof}

\begin{proof}[Proof of Proposition \ref{prop:ConjQeqConjBK}]
  Since B$_{\cB\cK}(X)$ is a special case of B$_\bQ(X)$, we just need to prove the
  other implication. Let  $0 \neq\alpha\in \NS(X)_\bQ$ with $q(\alpha)=0$, i.e. $\alpha \in \NS(X)_\bQ\cap \cC_X$. By
  Proposition \ref{prop:RefltoBK} there exist $R\in \Refl\subseteq \MonHdg(X)$ such that
  $R(\alpha)\eqqcolon\beta\in \birKbar_X$.  Associate to $R^{-1}\in \MonHdg(X)$  a cycle $Z\in \CH^{2n}$ as in
  Proposition \ref{prop:MonOKonChow}.

  Assuming that B$_{\cB\cK}(X)$ holds, implies that $\beta^{n+1}=0\in
  \CH_\bQ^{n+1}(X)$. Then  
  \begin{equation*}
    \alpha^{n+1}=\big(R^{-1}(\beta)\big)^{n+1}
    =\big({Z}_*(\beta)\big)^{n+1}
    ={Z}_*(\underbrace{\beta^{n+1}}_{=0})=0.
  \end{equation*}
This proves Proposition \ref{prop:ConjQeqConjBK}.
\end{proof}

\section{Main result}
This section contains the main result of this article, which relates the conjectures WSP and RLF. We
deduce that WSP holds in many known cases.

Let as before $X$ be a $2n$-dimensional irreducible symplectic variety.
\begin{definition}
  A line bundle $L\in \Pic(X)$ is said to {\it induce a rational Lagrangian fibration} if there exists a
  birational map $f\colon X \dashrightarrow X'$ to an irreducible symplectic variety $X'$, and 
  a fibration $\phi\colon X'\surj B$ to a (possibly singular) $n$-dimensional projective base $B$, such
  that $L$ corresponds to an ample line bundle on $B$.
\end{definition}

The following is known as Hasset--Tschinkel--Huybrechts--Sawon conjecture:
\begin{name}[Conjecture (RLF$(X)$)]
  Suppose  $0\neq L \in \Pic(X) \cap
  \birKbar\hspace{-0.4em}_X$ satisfies $q(L)=0$. Then $L$ induces a rational Lagrangian fibration.
\end{name}

We can now formulate the main result of this article:
\begin{theorem} \label{thm:ConjBtoConjA}
  For any irreducible symplectic variety $X$ with $\NS(X)_\bQ\cap \del\cC_X \neq 0$, RLF$(X)$ implies WSP$(X)$.
\end{theorem}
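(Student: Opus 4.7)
The plan is to combine the equivalences from Sections~\ref{sec:redtoLB} and~\ref{sec:redtobirKbar} with the cycle-theoretic transfer furnished by Corollary~\ref{cor:Gammaf}, and then to exploit the Lagrangian fibration supplied by RLF$(X)$ through a direct computation on its base.

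By Proposition~\ref{prop:BeauvilletoLB}, the hypothesis $\NS(X)_\bQ \cap \del\cC_X \neq 0$ makes WSP$(X)$ equivalent to B$_\bQ(X)$, and by Proposition~\ref{prop:ConjQeqConjBK} the latter is equivalent to B$_{\cB\cK}(X)$. It therefore suffices to verify that every $0 \neq \alpha \in \NS(X)_\bQ \cap \del\cC_X \cap \birKbar_X$ satisfies $\alpha^{n+1} = 0$ in $\CH^{n+1}_\bQ(X)$. Since $\birKbar_X$, $\del\cC_X$ and $\NS(X)_\bQ$ are stable under positive rational scaling, I may replace $\alpha$ by a suitable positive integer multiple and assume $\alpha = c_1(L)$ for a line bundle $L \in \Pic(X) \cap \birKbar_X$ with $q(L) = 0$; it is then enough to show $c_1(L)^{n+1} = 0$ in $\CH^{n+1}(X)$.

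Applying RLF$(X)$ to $L$ produces a birational map $g\colon X \dashrightarrow X'$ to an irreducible symplectic variety together with a fibration $\phi\colon X' \surj B$ onto an $n$-dimensional projective base, such that $L' \coloneqq (g^{-1})^* L \in \Pic(X')$ is of the form $\phi^* H$ for some ample $H \in \Pic(B)$. Since $\dim B = n$, we have $H^{n+1} = 0$ in $\CH(B)$, whence
\begin{equation*}
  (L')^{n+1} = \phi^*(H^{n+1}) = 0 \in \CH^{n+1}(X').
\end{equation*}

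To transfer this vanishing back to $X$, I apply Corollary~\ref{cor:Gammaf} to the birational map $g^{-1}\colon X' \dashrightarrow X$: it produces a cycle $Z \in \CH^{2n}(X \times X')$ such that $Z_*\colon \CH(X) \to \CH(X')$ is an isomorphism of graded rings and $[Z]_*\colon H^2(X,\bZ) \to H^2(X',\bZ)$ coincides with $(g^{-1})^*$. The class $Z_*(L) \in \CH^1(X') = \Pic(X')$ has cohomology class $(g^{-1})^* c_1(L) = c_1(L')$; since $X'$ is simply connected, the Chern class map $\Pic(X') \to H^2(X',\bZ)$ is injective, hence $Z_*(L) = L'$ in $\CH^1(X')$. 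As $Z_*$ is a ring homomorphism, $Z_*(c_1(L)^{n+1}) = (L')^{n+1} = 0$, and its injectivity then forces $c_1(L)^{n+1} = 0$ in $\CH^{n+1}(X)$, proving B$_{\cB\cK}(X)$ and thus WSP$(X)$. The one delicate point is the Chow-level identification $Z_*(L) = L'$, which rests on the vanishing of $\Pic^0$ for irreducible symplectic varieties; the remainder is a direct assembly of results already in place.
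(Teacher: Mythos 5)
Your overall route is the same as the paper's: reduce via Proposition~\ref{prop:BeauvilletoLB} and Proposition~\ref{prop:ConjQeqConjBK} to B$_{\cB\cK}(X)$, pass to a line bundle $L\in\Pic(X)\cap\birKbar_X$ with $q(L)=0$, apply RLF$(X)$, and transfer the vanishing through the Chow-level correspondence of Corollary~\ref{cor:Gammaf}. Your handling of the transfer is fine (running $Z_*$ from $X$ to $X'$ and using injectivity, with the identification $Z_*(L)=L'$ justified by $\Pic(X')\hookrightarrow H^2(X',\bZ)$, is just the mirror image of the paper's use of ${Z_f}_*=f^*$ on $\Pic$, and you make that identification more explicit than the paper does).

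However, there is a genuine gap at the step ``$H^{n+1}=0$ in $\CH(B)$, whence $(L')^{n+1}=\phi^*(H^{n+1})=0$''. The base $B$ of the rational Lagrangian fibration is only a (possibly singular) projective variety: $\CH(B)$ then carries no intersection product, and $\phi^*$ on Chow groups is not defined for an arbitrary (in general non-flat) morphism to a singular base, so neither ``$H^{n+1}\in\CH(B)$'' nor ``$\phi^*(H^{n+1})$'' makes sense as written; one cannot simply push the dimension count on $B$ up to $X'$ (the projection formula only shows that $\phi_*\bigl(c_1(\phi^*H)^{n+1}\cap[X']\bigr)$ vanishes, not the class itself). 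This is exactly what the paper's Lemma~\ref{lem:pullbackfromsingular} is for: one resolves $B$ by $\Btild\to B$, blows up $X'$ in smooth centers to obtain $\Xhat\to\Xtild=X'\times_B\Btild$, uses smoothness of $\Btild$ to get $\bigl(r^*(M)\bigr)^{n+1}=0$ there, and then deduces $\bigl(\phi^*(M)\bigr)^{n+1}=0$ on $X'$ from the injectivity of pullback along blow-ups of smooth varieties in smooth loci. Your argument is correct if $B$ happens to be smooth (or if $\phi$ were flat, via Chern-class operations), but in the stated generality you need to insert this lemma, or an equivalent desingularization argument, to close the proof.
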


For the proof we will use the following Lemma:
\begin{lemma}\label{lem:pullbackfromsingular}
Let $X'$ be an irreducible symplectic variety, and consider a Lagrangian fibration $X'
\overset{\phi}{\too} B$ over a possibly singular base $B$. Any line bundle $M \in \Pic(B)$ satisfies 
\begin{equation*}
  \big(\phi^*(M)\big)^{n+1}=0 \in \CH^{n+1}(X').
\end{equation*}
\end{lemma}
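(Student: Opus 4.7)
The plan is to exploit the fact that $\dim B = n$: any $(n{+}1)$-fold self-intersection of a divisor on $B$ should vanish for dimensional reasons, and this vanishing should transfer to $X'$ under pullback. The delicate point is that $B$ may be singular, so one cannot simply invoke $\CH^{n+1}(B) = 0$ as one would for a smooth base.

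First I would write $M$ as a difference of very ample line bundles on $B$. Fix an ample line bundle $H$ on $B$ and $N \gg 0$ so that both $L_1 \coloneqq M \otimes H^{\otimes N}$ and $L_2 \coloneqq H^{\otimes N}$ are very ample; then $M \cong L_1 \otimes L_2^{-1}$, so $\phi^*(M) = \phi^*L_1 - \phi^*L_2$ in $\CH^1(X')$. Expanding $\big(\phi^*(M)\big)^{n+1}$ by the binomial formula, it suffices to prove that each mixed product $(\phi^*L_1)^a (\phi^*L_2)^{n+1-a}$ vanishes in $\CH^{n+1}(X')$ for every $0 \le a \le n+1$.

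For this, I would pick $a$ general sections $s_1,\dots,s_a$ of $L_1$ and $n+1-a$ general sections $t_1,\dots,t_{n+1-a}$ of $L_2$ on $B$. A Bertini-type dimension count (requiring only that $B$ is irreducible of dimension $n$ and that $L_1,L_2$ are base-point free) shows that the common zero locus of these $n+1$ sections is empty on $B$. Pulling back, the sections $\phi^*s_i$ and $\phi^*t_j$ have common zero locus $\phi^{-1}(\emptyset) = \emptyset$ on $X'$. Since the support of an intersection product of Cartier divisors on a smooth variety is contained in the set-theoretic intersection of their supports, the intersection cycle representing $(\phi^*L_1)^a (\phi^*L_2)^{n+1-a}$ on the smooth $X'$ is supported on the empty set, hence vanishes in $\CH^{n+1}(X')$.

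The main obstacle is ensuring that the Bertini-type dimension bound really holds on the possibly singular $B$. This is not serious: base-point freeness of $L_i$ guarantees that a general section does not vanish identically on any fixed positive-dimensional subvariety of $B$, and induction on the number of sections gives the desired empty common vanishing. A cleaner abstract alternative is to argue directly with Fulton's operational Chow theory: the bivariant class $c_1(M)^{n+1} \in A^{n+1}(B)$ acts as zero on $\CH_*(B)$ for dimension reasons (its image lands in negatively indexed Chow groups), functoriality then yields $c_1(\phi^*(M))^{n+1} = \phi^*c_1(M)^{n+1} = 0$ in $A^{n+1}(X')$, and the identification $A^{n+1}(X') \cong \CH^{n+1}(X')$ coming from the smoothness of $X'$ delivers the conclusion.
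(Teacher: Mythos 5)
Your main argument is correct, but it takes a genuinely different route from the paper. The paper handles the singular base by resolving it: it takes a desingularization $\Btild \too B$, a smooth model $\Xhat$ of $X'\times_B \Btild$ obtained from $X'$ by blow-ups in smooth centres, uses $\big(r^*(M)\big)^{n+1}=0$ in $\CH^{n+1}(\Btild)$ because $\Btild$ is smooth of dimension $n$, and concludes via the injectivity of $r'^*\colon \CH(X')\to\CH(\Xhat)$ for such blow-ups (Fulton, Prop.~6.7(b)). You instead stay on $B$ itself: writing $M$ as a difference of very ample bundles, expanding, and choosing $n+1$ general sections with empty common zero locus (the inductive Bertini-type dimension count only needs base-point freeness and works on the integral projective $B$), so that each monomial $(\phi^*L_1)^a(\phi^*L_2)^{n+1-a}$ is represented, via intersection classes with supports, by a cycle supported on $\phi^{-1}(\emptyset)=\emptyset$. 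Your approach buys independence from resolution of singularities and from the blow-up injectivity statement (so it is more elementary and characteristic-free), at the cost of a slightly longer hands-on argument; the paper's proof is shorter given the cited tools and packages the geometry into one commutative square. One caveat: your ``cleaner abstract alternative'' is not justified as stated --- the fact that $c_1(M)^{n+1}$ acts as zero on $\CH_*(B)$ for dimension reasons does not imply that the operational class vanishes, nor that it acts as zero on $[X']$ viewed as a scheme over $B$; to run that argument one needs the vanishing of the operational group $A^{n+1}(B)$ itself, which (via Kimura's envelope descent) again invokes resolution of singularities, i.e.\ essentially the paper's route. Since your section-moving argument is complete on its own, this does not affect the correctness of your proof.
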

\begin{proof}
  Fix $M\in \Pic(B)$. Consider a desingularization $\Btild\overset{r}{\too} B$, and set $\Xtild\coloneqq X' \times_B
  \Btild$. Let $\Xhat$ be a variety obtained from $X'$ by a sequence of blow-ups in smooth loci, which
  allows for a map $\Xhat\to \Xtild$. Then there is a commutative diagram 
$$
 \begin{tikzcd}
   \Xhat \ar{r}{r'} \ar{d}[swap]{\phi'} &X' \ar{d}{\phi}\\
   \Btild \ar{r}{r} & B \, ,
 \end{tikzcd}
$$
and therefore $\phi'^*r^*(M)=r'^*\phi^*(M) \in \Pic(\Xhat)$.

Since $\Btild$ is smooth of dimension $n$, its Chow groups are endowed with a multiplicative structure,
and therefore $\big(r^*(M)\big)^{n+1}=0 \in \CH^{n+1}(\Btild)$.

Consequently:
\begin{equation*}
  r'^*\Big(\big(\phi^*(M)\big)^{n+1}\Big)=\big(r'^*\phi^*(M)\big)^{n+1}
  =\big(\phi'^*r^*(M)\big)^{n+1}=\phi'^*\Big(\underbrace{\big(r^*(M)\big)^{n+1}}_{=0}\Big)
  =0.
\end{equation*}
Complete the proof by observing that $r'^*$ is injective, since $r'$ is a sequence of blow-ups of smooth
varieties in smooth loci (see \cite[Proposition 6.7.(b)]{Fulton}).
\end{proof}

\begin{proof}[Proof of Theorem \ref{thm:ConjBtoConjA}]
  The theorem follows from Proposition \ref{prop:BeauvilletoLB} and Proposition
  \ref{prop:ConjQeqConjBK}, by observing that RLF$(X)$ implies B$_{\cB\cK}(X)$:

  Suppose that RLF$(X)$ holds. Every $\alpha\in \NS(X)_\bQ\cap \del\cC_X \cap \birKbar_X$ has a multiple
  $L\in \Pic(X)$. Assuming RLF$(X)$ implies that $L$ induces a rational Lagrangian fibration
  $X\overset{f}{\dashrightarrow}X'\overset{\phi}{\too}B$. By means of Corollary \ref{cor:Gammaf}, associate to $f$ a
  cycle $Z_f$, with ${Z_f}_*=f^*\colon\Pic(X')\to \Pic(X)$.
  Then there exists an ample line bundle $M\in \Pic(B)$ such that
  \begin{equation*}
    L^{n+1}=\Big(f^*\circ \phi^*(M)\Big)^{n+1}
    =\Big({Z_f}_*\circ \phi^*(M)\Big)^{n+1}
    \overset{(*)}{=}{Z_f}_*\Big(\underbrace{\big(\phi^*(M)\big)^{n+1}}_{=0}\Big)
    =0.
  \end{equation*}
   Note that the vanishing follows from Lemma \ref{lem:pullbackfromsingular}, and that $(*)$ holds, because ${Z_f}_*$ is an isomorphism of graded rings by Theorem \ref{thm:cycletodegofiso}.
   This proves the theorem.
\end{proof}

\begin{corollary}\label{cor:thmforbigrho}
  For an irreducible symplectic variety $X$ with $\rho(X)\geq 5$, RLF$(X)$ implies WSP$(X)$.
\end{corollary}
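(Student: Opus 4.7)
The plan is to deduce this corollary directly from Theorem \ref{thm:ConjBtoConjA} by verifying its hypothesis, namely that $\NS(X)_\bQ \cap \del\cC_X \neq 0$ whenever $\rho(X) \geq 5$. In other words, I need to produce a nonzero rational class on the boundary of the positive cone, i.e.\,a nonzero $\alpha \in \NS(X)_\bQ$ with $q(\alpha) = 0$.

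First I recall that the restriction of the Beauville--Bogomolov form $q$ to $\NS(X)_\bQ$ has signature $(1,\rho(X)-1)$, which was used earlier in the paper (e.g.\,in the proof of Lemma \ref{lem:NSdense}). Under the assumption $\rho(X) \geq 5$, this is an indefinite rational quadratic form in at least five variables. By Meyer's theorem from the theory of quadratic forms, any such form represents zero nontrivially over $\bQ$. Hence there exists a nonzero $\alpha \in \NS(X)_\bQ$ with $q(\alpha) = 0$.

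Next, I note that either $\alpha$ or $-\alpha$ lies in the closure $\overline{\cC_X}$: the positive cone is one of the two connected components of $\{\beta \in H^{1,1}(X,\bR) \mid q(\beta) > 0\}$, and the rays through nonzero isotropic vectors accumulate to exactly one of these two components. Since $q(\alpha) = 0$ with $\alpha \neq 0$, this yields $\pm \alpha \in \del\cC_X \cap \NS(X)_\bQ$, proving the required nonvanishing.

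With the hypothesis of Theorem \ref{thm:ConjBtoConjA} verified, RLF$(X)$ implies WSP$(X)$, which is the statement of the corollary. The only nontrivial input is the invocation of Meyer's theorem; once this is recognized as the relevant tool, the argument is essentially immediate, and there is no serious obstacle to carry out.
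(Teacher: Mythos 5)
Your proof is correct and takes essentially the same route as the paper, which also reduces the corollary to verifying the hypothesis of Theorem \ref{thm:ConjBtoConjA} and cites the Hasse--Minkowski/Meyer result (via \cite[IV.3: Corollary 2]{Serre73}) that an indefinite rational quadratic form of rank at least five represents zero nontrivially. Your additional remark that one of $\pm\alpha$ lies in $\del\cC_X$ is left implicit in the paper but is the same observation.
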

\begin{proof}
    Observe that by Hasse--Minkowski (see e.g. \cite[IV.3: Corollary 2]{Serre73}) any indefinite lattice
    (over $\bQ$) of rank $\geq 5$ contains a square-zero element. Therefore the corollary is just a
    special case of  Theorem \ref{thm:ConjBtoConjA}.
\end{proof}

In the following, we state a recent result of Matsushita, to which we can apply the above results, in
order to prove WSP in many cases. 

  An irreducible symplectic variety is said to be {\it of K$3^{[n]}$-type} if it is deformation equivalent to a
  Hilbert scheme of $n$ points on a K$3$ surface.
  Another series of examples of irreducible symplectic varieties is provided by generalized Kummer
  varieties (see \cite[Section 7]{Beauville1983}) and their deformations. 

Recently Matsushita showed that if a line bundle $L\in \Pic(X)$ induces a rational Lagrangian
fibration over $\bP^n$, then for any deformation $(X_t,L_t)$ of the pair $(X,L)$ with $L_t\in \birKbar_{X_t}$,
also $L_t$ induces a rational Lagrangian fibration over $\bP^n$ (see
\cite[Theorem1.2]{Matsushita13}). As a
corollary, he observed:
\begin{proposition}[{\cite[Corollary 1.1]{Matsushita13}}]\label{prop:ConjBforSC}
  If $X$ is either of K3$^{[n]}$-type or deformation equivalent to a generalized Kummer, then RLF$(X)$ holds.
\end{proposition}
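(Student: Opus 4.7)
My plan is to deduce Proposition~\ref{prop:ConjBforSC} from \cite[Theorem 1.2]{Matsushita13}, quoted in the preceding paragraph, by exhibiting an explicit model pair $(X_0,L_0)$ in each of the two deformation classes on which RLF is realized directly, and then transporting this property to any other pair via a suitably chosen deformation.

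First I would construct the models. In the K$3^{[n]}$-case, take $S$ to be an elliptic K$3$ surface with fibration $\pi\colon S\too\bP^1$; the induced map $S^{[n]}\too\Sym^n(\bP^1)\iso\bP^n$ is a Lagrangian fibration, and the pullback of $\cO_{\bP^n}(1)$ is an isotropic line bundle lying in $\cK_{S^{[n]}}\subseteq\birKbar_{S^{[n]}}$. For deformations of generalized Kummer varieties, one starts instead from an abelian surface $A$ admitting an elliptic fibration $A\too E$ and composes $A^{[n]}\too\Sym^n(E)$ with the restriction to the generalized Kummer $K^{n-1}(A)$; the resulting morphism $K^{n-1}(A)\too\bP^{n-1}$ is a Lagrangian fibration, and the pullback of $\cO(1)$ realizes the model line bundle. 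In both cases, $(X_0,L_0)$ automatically satisfies the conclusion of RLF.

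Second, for an arbitrary pair $(X,L)$ as in the hypothesis of RLF$(X)$, I would produce a smooth connected family $(\cX,\cL)\too T$ of polarized irreducible symplectic varieties with two distinguished fibres $(X,L)$ and $(X_0,L_0)$, in such a way that $\cL_t\in\birKbar_{\cX_t}$ for every $t\in T$. The construction relies on the global Torelli theorem and the surjectivity of the period map, combined with Markman's description of the monodromy action on $H^2$, which controls the deformation types of pairs $(X,L)$ with $L$ isotropic of fixed divisibility. Once this family is in hand, \cite[Theorem 1.2]{Matsushita13} propagates the rational Lagrangian fibration from $t=0$ along all of $T$, and in particular endows $(X,L)$ with one.

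The main obstacle is controlling the behaviour of $\cL_t$ along the connecting path: a priori, $\cL_t$ could leave the birational Kähler cone through a wall-crossing at some intermediate time, and then Matsushita's deformation theorem would no longer apply. Circumventing this requires combining Huybrechts' wall description of $\birKbar$ (Proposition~\ref{prop:birKbarFeC}) with Markman's classification of monodromy orbits in order to choose the deformation path entirely within the chamber in which $\cL_t\in\birKbar_{\cX_t}$. Once this careful choice of path is made, the remainder of the argument is formal.
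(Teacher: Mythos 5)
The paper itself gives no proof here: the proposition is quoted directly from Matsushita, and the remark that follows it records that the substantive input behind it is Markman's Theorem 6.3 together with the Bayer--Macr\`i and Yoshioka results on moduli spaces of sheaves. Your plan---produce one model pair $(X_0,L_0)$ in each deformation class and transport RLF through a family of pairs using Matsushita's Theorem 1.2---has the right general shape (it is essentially how Matsushita's corollary is deduced), but as written it contains a genuine gap: one model per deformation class of \emph{varieties} is not enough, because the relevant moduli problem is that of \emph{pairs} $(X,L)$. The monodromy orbit of $c_1(L)$, and in particular its divisibility in $H^2(X,\bZ)$, is invariant under any deformation of the pair. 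Your elliptic-K3 model produces an isotropic class of divisibility one (likewise the elliptic abelian-surface model for the Kummer case), whereas for $K3^{[n]}$-type there exist primitive isotropic classes of higher divisibility lying in $\birKbar$: for instance for $n=5$ the class $\alpha=2(e+f)+\delta$ with $e,f$ a hyperbolic pair satisfies $q(\alpha)=8-2(n-1)=0$ and has divisibility $2$. A pair carrying such a class can never be joined to your model by a connected family of pairs, no matter how the path is chosen, so your argument simply does not reach these cases. Covering all monodromy orbits is precisely what the cited work of Markman (Beauville--Mukai systems and Tate--Shafarevich twists), Bayer--Macr\`i and Yoshioka supplies, and it cannot be replaced by the single Hilbert-scheme example.

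A secondary point: your ``main obstacle'' paragraph asserts, rather than proves, that the connecting family can be chosen with $\cL_t\in\birKbar_{\cX_t}$ for all $t$. As Theorem 1.2 is quoted in the paper, the constraint is imposed only at the fibre where one wants to conclude, so the wall-crossing worry is not the real difficulty; the real difficulty hidden in that paragraph is the existence of a connected family of pairs linking $(X,L)$ to a model \emph{in the same monodromy orbit of $c_1(L)$}, which is exactly the point at which your sketch is silent. To repair the argument you would need, for every monodromy orbit of primitive isotropic classes in each of the two deformation classes, a model pair on which a Lagrangian fibration is known---i.e.\ the moduli-of-sheaves models of Markman, Bayer--Macr\`i and Yoshioka---at which point you have reproduced Matsushita's own deduction rather than found an alternative to it.
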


\begin{remark}
  In the special case of moduli spaces of sheaves on K3 surfaces, this was already shown by Bayer and
  Macr\`i in \cite[Remark 11.4]{BayerMacri13}. And for more general ``non-special''
  K3$^{[n]}$-type varieties $X$, it was first proved by Markman in \cite[Theorem
  6.3]{Markman13}.
  Both results build upon work of Markushevich (see \cite{Markushevich06}) and Sawon (see \cite{Sawon07}). 
  Matsushita's proof relies on \cite{Markman13} and \cite{Yoshioka12}.
\end{remark}

As an application of the main theorem, we can deduce:
\begin{corollary} \label{cor:ConjAforSC} 
Let $X$ be an irreducible symplectic variety with $\NS(X)_\bQ\cap \del\cC_X \neq 0$. If $X$ is of
  $K3^{[n]}$-type or deformation equivalent to a generalized Kummer variety, then WSP$(X)$
  (Beauville) is satisfied.
  In particular WSP$(X)$ holds for all $X$ of these deformations types which satisfy $\rho(X)\geq 5$.
\end{corollary}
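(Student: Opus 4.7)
The plan is to deduce this corollary as a direct combination of the main theorem of the article with Matsushita's recent result, so essentially no new work is required beyond assembling the pieces.

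First, I would invoke Proposition \ref{prop:ConjBforSC} (Matsushita's Corollary 1.1) to conclude that RLF$(X)$ holds whenever $X$ is of $K3^{[n]}$-type or deformation equivalent to a generalized Kummer variety. Then, under the hypothesis $\NS(X)_\bQ \cap \del\cC_X \neq 0$, I would apply Theorem \ref{thm:ConjBtoConjA} to conclude that RLF$(X)$ implies WSP$(X)$, which yields the first assertion. Both ingredients are already in place; nothing else needs to be checked for this part.

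For the ``In particular'' clause, I would reduce to the previous case by verifying the hypothesis $\NS(X)_\bQ \cap \del\cC_X \neq 0$ for $\rho(X)\geq 5$. This is exactly the content of Corollary \ref{cor:thmforbigrho}: by Hasse--Minkowski, any indefinite $\bQ$-lattice of rank at least $5$ represents zero, so $\NS(X)_\bQ$ contains a nonzero element $\alpha$ with $q(\alpha)=0$. After possibly replacing $\alpha$ by $-\alpha$, the element lies in $\overline{\cC_X}$ and hence in $\del\cC_X$, showing $\NS(X)_\bQ\cap\del\cC_X\neq 0$. Combined with the first statement, this gives the ``In particular'' part.

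Since the proof is a pure concatenation of prior results, the only place where one could imagine difficulty is in checking that $\NS(X)_\bQ$ is indefinite (for the application of Hasse--Minkowski). This is automatic because $\NS(X)_\bQ$ contains an ample class (so $q$ is positive somewhere) while for $\rho(X)\geq 2$ the Beauville--Bogomolov form has signature $(1,\rho(X)-1)$ on $\NS(X)_\bQ$, making it indefinite. Hence there is no real obstacle: the corollary is obtained by chaining Proposition \ref{prop:ConjBforSC}, Theorem \ref{thm:ConjBtoConjA}, and Corollary \ref{cor:thmforbigrho}.
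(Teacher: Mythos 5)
Your proposal is correct and follows exactly the paper's argument: the corollary is obtained by chaining Proposition \ref{prop:ConjBforSC}, Theorem \ref{thm:ConjBtoConjA}, and Corollary \ref{cor:thmforbigrho}. The extra details you supply (indefiniteness of $\NS(X)_\bQ$ for $\rho(X)\geq 5$ and replacing $\alpha$ by $-\alpha$ to land in $\del\cC_X$) are fine and consistent with the paper's use of Hasse--Minkowski.
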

 
\begin{proof}
  Apply Theorem \ref{thm:ConjBtoConjA} and Corollary \ref{cor:thmforbigrho} to Proposition \ref{prop:ConjBforSC}.
\end{proof}

\section{Reduction of Beauville's conjecture (WSP) to Picard rank two}\label{sec:redtorho2}
In this section we show that it is enough to prove Beauville's conjecture (WSP) for irreducible symplectic
varieties $Y$ with $\rho(Y)=2$.

Let $X$ be an arbitrary irreducible symplectic variety of dimension $2n$.
\begin{lemma}\label{lem:rho2dense}
  Fix an ample line bundle $H\in \Pic(X)$.
  The subset $M\coloneqq \{\alpha\in \del\cC_X\cap\NS(X)_\bR \mid \alpha = r H + s L,\ L\in \NS(X)_\bQ,\ r,s\in \bR \} \subseteq \del
  \cC_X\cap\NS(X)_\bR$ is dense. 
\end{lemma}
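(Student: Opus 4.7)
The plan is to approximate an arbitrary $\beta\in\del\cC_X\cap\NS(X)_\bR$ by the isotropic classes lying in two-planes of the form $\mathrm{span}(H,L)$ with $L\in\NS(X)_\bQ$ close to $\beta$. The underlying observation is that, for any $L\in \NS(X)_\bR$ not proportional to $H$, the restriction of $q$ to the two-plane $\mathrm{span}(H,L)$ has signature $(1,1)$ (by Cauchy--Schwarz for the signature $(1,\rho(X)-1)$ form $q$), so this plane meets $\del\cC_X$ in exactly two opposite rays. As $L\to\beta$ one expects the ray on the positive-cone side to tend to the ray through $\beta$.

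Concretely, after discarding the trivial case $\beta=0$, I would use that $H$ is ample to conclude that $H^\perp\subset \NS(X)_\bR$ is negative definite for $q$, so that every non-zero element of $\overline{\cC_X}$ pairs strictly positively with $H$; in particular $(H,\beta)_q>0$ and $\beta$ is not a multiple of $H$. Next, pick any sequence $L_i\in \NS(X)_\bQ$ with $L_i\to\beta$ (which exists by density of $\NS(X)_\bQ$ in $\NS(X)_\bR$). For $i$ sufficiently large, $L_i$ is not proportional to $H$ and the discriminant
\[
  \Delta_i\coloneqq(H,L_i)_q^2-q(H)q(L_i)
\]
is positive, as it tends to $(H,\beta)_q^2>0$. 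Solving $q(rH+L_i)=0$ in $r$ then yields two real roots
\[
  r_i^\pm\coloneqq\frac{-(H,L_i)_q\pm\sqrt{\Delta_i}}{q(H)},
\]
giving two isotropic directions $\alpha_i^\pm\coloneqq r_i^\pm H+L_i$ in $\mathrm{span}(H,L_i)$.

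To single out the ray on the side of the positive cone, I would compute
\[
  (H,\alpha_i^\pm)_q=(H,L_i)_q+r_i^\pm q(H)=\pm\sqrt{\Delta_i},
\]
and set $\alpha_i\coloneqq \alpha_i^+$. Then $(H,\alpha_i)_q>0$ together with $q(\alpha_i)=0$ forces $\alpha_i\in \overline{\cC_X}\setminus\{0\}=\del\cC_X\cup\cC_X$ with the boundary option the correct one; and since $\alpha_i=r_i^+H+L_i$ with $r_i^+\in \bR$ and $L_i\in\NS(X)_\bQ$, one has $\alpha_i\in M$. Finally, because $\sqrt{\Delta_i}\to (H,\beta)_q$, the coefficient $r_i^+$ tends to $0$, and hence $\alpha_i\to\beta$ in $\NS(X)_\bR$, proving density.

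The only delicate point — and the main (if minor) obstacle — is identifying which of the two isotropic rays in $\mathrm{span}(H,L_i)$ is the one converging to $\beta$, i.e.\,ensuring that $\alpha_i^+$ (not $\alpha_i^-$) lies in $\overline{\cC_X}$. This is completely pinned down by the sign computation for $(H,\alpha_i^\pm)_q$ above; the rest of the argument is pure continuity in the Beauville--Bogomolov form. The approach is in the same spirit as Lemma \ref{lem:NSdense}, but here we cannot assume $\del\cC_X\cap\NS(X)_\bQ\neq 0$, and it is precisely the added flexibility of the ample class $H$ that allows the construction to go through regardless.
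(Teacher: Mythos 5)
Your proof is correct, and it rests on the same basic mechanism as the paper's: produce boundary points of the form $rH+sL$ with $L$ rational by moving inside the pencil spanned by the ample class $H$ and a nearby rational class. The execution differs, though. The paper argues with open sets: given an open $U\subseteq\del\cC_X\cap\NS(X)_\bR$, the open cone $\{(1-r)H+ru \mid r\in \bR_{>0},\ u\in U\}$ contains a rational class $L$, and the unique point of $\del\cC_X$ on the ray from $H$ through $L$ is a point of $U\cap M$ — no limits and no root selection are needed, because choosing $L$ inside the cone over $U$ puts it on the correct side of $H$ from the start. You instead approximate a given $\beta\in\del\cC_X\cap\NS(X)_\bR$ by arbitrary rational classes $L_i$ and correct each one by the real multiple $r_i^+H$ coming from the quadratic formula, identifying the correct isotropic root via the sign of $(H,\alpha_i^\pm)_q$ and checking $r_i^+\to 0$. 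Your version makes explicit what the paper leaves implicit (that the line through $H$ and a nearby class meets $\del\cC_X$ in exactly one ray, pinned down by $(H,\cdot)_q>0$), at the cost of the convergence bookkeeping; the paper's coning argument does the root selection geometrically and yields an element of $U\cap M$ on the nose. A side remark: your blanket claim that every two-plane containing $H$ has signature $(1,1)$ is in fact correct (a radical vector of the restriction would be isotropic and lie in the negative definite $H^\perp$), but your proof only needs $\Delta_i>0$ for large $i$, which you verify directly, so nothing hinges on it.
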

\begin{proof}
Fix an arbitrary open set $U\subseteq \del\cC_X\cap\NS(X)_\bR$. 
The open cone $\{(1-r)H + ru \mid r\in \bR_{>0}, u\in U\}$ contains a rational element $L\in \NS(X)_\bQ$.
There is a unique $r\in \bR_{>0}$ such that $\alpha_r\coloneqq(1-r)H+rL$ lies in $\del\cC_X \cap
\NS(X)_\bR$. By construction this $\alpha_r$ lies in $U \cap M$.
\end{proof}

In order to reduce to Picard rank two, we will use \cite[Theorem 10.19]{VoisinII}, which
is originally due to Bloch and Srinivas (\cite{Bloch80}, \cite{Bloch-Srinivas83}).
In fact, we need the following slightly more general result:

\begin{proposition}\label{prop:Bloch-Srinivas-general}
  Let $f\colon\cX \to T$ be a smooth projective morphism between smooth
  varieties. Fix a cycle $Z\in \CH^k(\cX)$.
  Suppose that there exists a subvariety $\cX'\subseteq \cX$, such that for general $t\in T$
  the restricted cycle $Z_t\coloneqq \restr{Z}{\cX_t}\in \CH^k(\cX_t)$ satisfies $Z_t=0\in \CH^k(\cX_t \setminus \cX_t')$.

  Then there exists $m\in \bZ_{>0}$, a cycle $Z'$ supported in $\cX'$, a proper closed algebraic subset
  $T'\subsetneq T$, and a cycle $Z''$ supported in $f^{-1}(T')$, which satisfy
  \begin{equation*}
    mZ=Z'+Z''\in \CH^k(\cX).
  \end{equation*}
\end{proposition}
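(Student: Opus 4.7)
The plan is to follow the classical Bloch--Srinivas spreading argument underlying \cite[Theorem 10.19]{VoisinII}, the only modification being that the fiberwise vanishing of $Z_t$ is replaced by vanishing modulo cycles supported on $\cX'$. Accordingly, the proof splits into three parts: upgrading fiberwise vanishing to vanishing over a dense open subset of $T$, extracting a supported cycle from a localization sequence on the fibers, and then using a localization sequence on the base.

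First, I would pass to the generic fiber. Let $\eta$ denote the generic point of $T$, and recall that $\CH^k(\cX_\eta \setminus \cX'_\eta) = \varinjlim_{U} \CH^k((\cX \setminus \cX') \cap f^{-1}(U))$, the colimit taken over dense open subsets $U \subseteq T$. The hypothesis $Z_t|_{\cX_t \setminus \cX'_t} = 0$ for general $t$, combined with the standard Bloch--Srinivas countability argument (rational equivalences witnessing $Z_t \equiv 0$ on the open complement live in a countable union of Hilbert/Chow schemes over $T$), yields a positive integer $m$ and a dense open $U \subseteq T$ with
$$m \cdot Z|_{(\cX \setminus \cX') \cap f^{-1}(U)} \;=\; 0 \;\in\; \CH^k\bigl((\cX \setminus \cX') \cap f^{-1}(U)\bigr).$$
This is precisely where the multiplier $m$ enters, and it is the main obstacle of the proof.

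Next, I would invoke the localization sequence \cite[Proposition 1.8]{Fulton} for the closed embedding $\cX' \cap f^{-1}(U) \hookrightarrow f^{-1}(U)$:
$$\CH^*(\cX' \cap f^{-1}(U)) \xrightarrow{i_*} \CH^k(f^{-1}(U)) \longrightarrow \CH^k\bigl((\cX \setminus \cX') \cap f^{-1}(U)\bigr) \longrightarrow 0.$$
By the vanishing in the previous step, $m Z|_{f^{-1}(U)}$ lies in the image of $i_*$, hence equals $i_* \beta$ for some cycle $\beta$ supported on $\cX' \cap f^{-1}(U)$. Taking Zariski closures of its components produces a cycle $Z' \in \CH^k(\cX)$ supported in $\cX'$ whose restriction to $f^{-1}(U)$ equals $i_* \beta$. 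Thus $(mZ - Z')|_{f^{-1}(U)} = 0$.

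Finally, set $T' \coloneqq T \setminus U$, a proper closed subset of $T$. Applying the localization sequence on the base,
$$\CH^k(f^{-1}(T')) \longrightarrow \CH^k(\cX) \longrightarrow \CH^k(f^{-1}(U)) \longrightarrow 0,$$
one concludes from $(mZ - Z')|_{f^{-1}(U)} = 0$ that $mZ - Z' = Z''$ for some cycle $Z''$ supported in $f^{-1}(T')$. This is the decomposition $mZ = Z' + Z''$ required. As noted above, the only nontrivial input is the classical spreading step producing $m$ and $U$; the remainder is a formal chase through the two localization sequences.
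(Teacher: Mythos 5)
Your proposal is correct and is essentially the paper's argument: the paper simply remarks that Voisin's proof of \cite[Theorem 10.19]{VoisinII} already yields the statement under the weaker hypothesis at a general $t$, and what you have written out (spreading from the generic fiber via the colimit description and a countability/finite-descent step producing $m$ and a dense open $U$, followed by the localization sequences for $\cX'\cap f^{-1}(U)\subseteq f^{-1}(U)$ and for $f^{-1}(T')\subseteq \cX$) is exactly that classical Bloch--Srinivas proof. So the two proofs coincide in substance; yours merely makes explicit the argument the paper cites.
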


\begin{proof}
  The statement of \cite[Theorem 10.19]{VoisinII} only differs in one point: There, it is required that
  $Z_t=0\in \CH^k(\cX_t\setminus \cX_t')$ for all $t\in T$, not just for a general element. However, the
  proof presented in \cite{VoisinII}, already proves the slightly more general statement above.
\end{proof}

\begin{proposition}
  Suppose that for all irreducible symplectic varieties $Y$ satisfying $\rho(Y)=2$ WSP$(Y)$
  holds. Then WSP holds for all irreducible symplectic varieties.
\end{proposition}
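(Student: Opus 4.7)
The plan is to combine the equivalences from Section \ref{sec:redtoLB}, the density Lemma \ref{lem:rho2dense}, and a Bloch--Srinivas style spreading argument based on Proposition \ref{prop:Bloch-Srinivas-general}.

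First I would use Proposition \ref{prop:BeauvilletoLB} to rephrase WSP$(X)$ as B$_\bR(X)$: the vanishing $\alpha^{n+1}=0\in\CH^{n+1}_\bR(X)$ for every $\alpha\in\NS(X)_\bR$ with $q(\alpha)=0$. Combining Lemma \ref{lem:rho2dense} with the limit argument of Lemma \ref{lem:densesubset}, the problem reduces to proving $\alpha^{n+1}=0$ when $\alpha=rH+sL\in\del\cC_X\cap\NS(X)_\bR$, with $H\in\Pic(X)$ a fixed ample class, $L\in\NS(X)_\bQ$ and $r,s\in\bR$.

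Next, for fixed $H$ and $L$ I would observe that vanishing of $(rH+sL)^{n+1}$ at every real root $(r:s)\in\bP^1(\bR)$ of the rational quadratic $q(H)R^2+2(H,L)_qRS+q(L)S^2=0$ is equivalent, by polynomial division with this $\bQ$-coefficient quadratic (dehomogenising in $R$), to a finite system of $\bQ$-linear identities among $\{H^iL^{n+1-i}\}_{0\leq i\leq n+1}$ in $\CH^{n+1}_\bQ(X)$. It therefore suffices to establish these rational identities on $X$.

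To prove them, I would embed $X$ in an algebraic family $f\colon\cX\to T$ of irreducible symplectic varieties over a smooth base $T$, with $X\iso\cX_{t_0}$ for some $t_0\in T$, with line bundles $\cH,\cL\in\Pic(\cX)$ extending $H,L$, and such that for very general $t\in T$ one has $\NS(\cX_t)_\bQ=\bQ\cH|_{\cX_t}\oplus\bQ\cL|_{\cX_t}$. Such a family can be constructed as an algebraic model of the (at most codimension two) subspace of the Kuranishi space of $X$ along which $H$ and $L$ remain of type $(1,1)$, e.g.\ a component of a moduli space of irreducible symplectic manifolds polarized by $\bQ H+\bQ L$. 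For very general $t$, $\rho(\cX_t)=2$, so the hypothesis together with Proposition \ref{prop:BeauvilletoLB} yields the analogous rational identities in $\CH^{n+1}_\bQ(\cX_t)$. Letting $Z\in\CH^{n+1}_\bQ(\cX)$ be the corresponding $\bQ$-linear combination of the classes $\{\cH^i\cL^{n+1-i}\}$, one then has $Z_t=0$ for very general $t$, and Proposition \ref{prop:Bloch-Srinivas-general} (applied with $\cX'=\emptyset$) produces $m\in\bZ_{>0}$, a proper closed algebraic subset $T'\subsetneq T$, and a cycle $Z''$ supported on $f^{-1}(T')$ with $mZ=Z''$ in $\CH^{n+1}_\bQ(\cX)$.

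The hard part will be ensuring that $t_0\notin T'$, so that restricting to $\cX_{t_0}\iso X$ yields the desired rational identities and hence $\alpha^{n+1}=0\in\CH^{n+1}_\bR(X)$. Since $X$ may have $\rho(X)>2$, the base point $t_0$ lies in the Noether--Lefschetz jumping locus of the chosen family, which could a priori be contained in $T'$; a second subtlety is to upgrade the "very general" vanishing of $Z_t$ to vanishing over a dense Zariski open, as required by the hypothesis of Proposition \ref{prop:Bloch-Srinivas-general}. One resolves both points by exploiting the freedom in the choice of the polarized moduli component so that $t_0$ is a sufficiently generic point of $T$ with respect to the cycle $Z$, or by iterating the spreading argument along the successive Noether--Lefschetz strata containing $t_0$ until $X$ itself is reached.
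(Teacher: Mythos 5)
Your reduction steps are exactly the paper's: Proposition \ref{prop:BeauvilletoLB}, Lemma \ref{lem:densesubset} and Lemma \ref{lem:rho2dense} bring everything down to showing $(rH+sL)^{n+1}=0$ for $rH+sL\in\del\cC_X\cap\NS(X)_\bR$, and the detour through $\bQ$-linear identities among the $H^iL^{n+1-i}$ is harmless but unnecessary (the paper simply applies Proposition \ref{prop:Bloch-Srinivas-general} to the real class $(r\cH+s\cL)^{n+1}$ itself). The genuine gap is precisely the point you flag and then wave at: spreading over a \emph{higher-dimensional} base $T$ (a polarized moduli component) cannot be concluded. If $\rho(X)>2$, the point $t_0$ necessarily lies in the Noether--Lefschetz jumping locus of your family, and Proposition \ref{prop:Bloch-Srinivas-general} gives you no control whatsoever over the exceptional set $T'$; when $\dim T\geq 2$, a cycle supported on $f^{-1}(T')$ can restrict non-trivially to $\cX_{t_0}$. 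Neither of your proposed remedies works: $t_0$ cannot be made ``sufficiently generic with respect to $Z$'' because it is pinned inside the jumping locus while $T'$ depends on $Z$ and is not at your disposal; and iterating the spreading argument along the Noether--Lefschetz strata through $t_0$ requires the vanishing $Z_t=0$ at the general point of a stratum whose fibres have $\rho>2$ --- which is exactly the statement you are trying to prove --- so that induction never gets started.

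The paper's fix is to make the base \emph{one-dimensional}: choose a smooth curve $T$ (inside the locus where both $H$ and $L$ stay algebraic) carrying a family $\cX\to T$ with $\cX_0=X$, with $\cH,\cL$ extending $H,L$, and with $\rho(\cX_t)=2$ for general $t$. Then $T'\subsetneq T$ is a finite set of points, so $Z''$ is supported on finitely many fibres, and its restriction to $\cX_0$ vanishes automatically: components lying over points $t\neq 0$ are disjoint from $\cX_0$, and a component supported on $\cX_0$ itself dies under the Gysin restriction because the normal bundle of a fibre of $\cX\to T$ is trivial. Hence $\alpha^{n+1}=\tfrac1m\restr{Z''}{\cX_0}=0$ without any need to ensure $0\notin T'$. (The ``general versus very general'' issue you mention is real but standard --- vanishing at very general $t$ gives vanishing at the geometric generic point, which is what the Bloch--Srinivas argument uses --- and it is independent of the problem above; with a one-dimensional base it causes no trouble.) So your outline is the right one, but the step you yourself identify as ``the hard part'' is exactly where your argument fails, and the cure is the choice of a one-parameter family rather than genericity or stratification tricks.
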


\begin{proof}
  Let $X$ be an arbitrary irreducible symplectic variety of dimension $2n$, and fix an ample line bundle $H$. 
  In Section \ref{sec:redtoLB} we showed the equivalence of WSP$(X)$ with B$_\bR(X)$ (see
  Proposition \ref{prop:BeauvilletoLB}). Additionally we showed (see Lemma \ref{lem:densesubset}) that in
  order to prove  B$_\bR(X)$ it is enough to find a dense subset
  $M\subseteq\del\cC_X\cap \NS(X)_\bR$ such that the identity $\alpha^{n+1}=0\in \CH^{n+1}_\bR(X)$ is
  satisfied for all $\alpha\in M$.
  By Lemma \ref{lem:rho2dense}, we can chose $M$ to be $\{\alpha\in \del\cC_X\cap\NS(X)_\bR\mid \alpha =
  r H + s L,\ L\in \NS(X)_\bQ,\ r,s\in \bR \}$.

  Fix $\alpha=rH+sL\in M \subseteq \del \cC_X\cap\NS(X)_\bR$, together with suitable $r,s\in \bR$, and
  $L\in \Pic(X)$. We only need to show, that $\alpha^{n+1}=0\in \CH^{n+1}_\bR(X)$.

  Choose an algebraic family $\cX$ over a one-dimensional smooth base $T$, which satisfies 
  \begin{enumerate}[(a)]
  \item there is a point $0\in T$ such that $\cX_0=X$,
  \item all fibres $\cX_t$ are smooth,
  \item the line bundles $H$ and $L$ deform in this family, i.e. there exist line bundles $\cH,\cL\in
    \Pic(\cX)$, such that $\cH_0\coloneqq \restr{\cH}{\cX_0}=H$ and $\cL_0\coloneqq \restr{\cL}{\cX_0}=L$, and
  \item for general $t \in T$ the Picard rank is $\rho(\cX_t)=2$.
  \end{enumerate}

  Consider the class $\cA\coloneqq r\cH+ s \cL\in \CH^1_\bR(\cX)$, and note that
  $\cA_0\coloneqq \restr{\cA}{\cX_0}=\alpha\in \CH^1_\bR(X)$. Let $t\in T$ be such that $\cX_t$ is an
  irreducible symplectic variety with $\rho(\cX_t)=2$. Then, by the assumption, WSP$(\cX_t)$ and thus
  B$_\bR(\cX_t)$ holds. Since the Beauville--Bogomolov form is invariant under
  deformations, $q(\cA_t)=q(\alpha)=0$. Therefore, B$_\bR(\cX_t)$ implies that $\cA_t^{n+1}=0\in
  \CH^{n+1}_\bR(\cX_t)$. 

  Apply Proposition \ref{prop:Bloch-Srinivas-general} with $Z\coloneqq \cA^{n+1}$ and $\cX'\coloneqq
  \emptyset$, in order to 
  conclude that there exists $m\in \bZ_{>0}$ and a cycle $Z''\in \CH^{n+1}_\bR(\cX)$ supported over
  finitely many points in $T$, such that $m\cA^{n+1}=Z''$. Pulled back to the special fibre, this gives:
  \begin{equation*}
    \alpha^{n+1}=\frac{m}{m}\cA_0^{n+1}=\frac{1}{m}\restr{Z''}{\cX_0}=0\in \CH^{n+1}_\bR(X),
  \end{equation*}
  which is all we needed to show.
\end{proof}

\bibliographystyle{alpha}
\bibliography{\folder Literatur}

\end{document}